\numberwithin{equation}{section}
\newtheorem{thm}{Theorem}[section]
\newtheorem{lem}[thm]{Lemma}
\newtheorem{cor}[thm]{Corollary}
\newtheorem{prop}[thm]{Proposition}
\newtheorem{defin}{Definition}[section]
\newcommand\eps{\epsilon}
\def\om{{\omega}}
\def\Om{{\Omega}}
\def\dsum{\displaystyle\sum}
\def\dlim{\displaystyle\lim}
\def\dsup{\displaystyle\sup}
\def\dint{\displaystyle\int}
\def\Erfc{\mathrm{Erfc}}
\begin{document}

\title{Particle model for the reservoirs in the simple symmetric exclusion process}

\author{Thu Dang Thien Nguyen}

\institute{Thu Dang Thien Nguyen \at
            Gran Sasso Science Institute, Viale Francesco Crispi, 7, L'Aquila  67100, Italy\\
			 Department of Mathematics, University of Quynhon, Quy Nhon, Vietnam\\
             \email{thu.nguyen@gssi.it}           
}

\date{Received: \today / Accepted: date}

\maketitle

\begin{abstract}
{In this paper, we will study the long time behavior of the simple symmetric exclusion process in the ``channel'' $\Lambda_N=[1,N]\cap \mathbb{N}$ with reservoirs at the boundaries. These reservoirs are also systems of particles which can be exchanged with the particles in the channel. The size $M$ of each reservoir is much larger than the one of the channel, i.e.  $M=N^{1+\alpha}$ for a fixed number $\alpha>0$. Based on the size of the channel and the holding time at each reservoir, we will investigate some types of rescaling time.
}
\end{abstract}
\keywords{Hydrodynamic limits \and Adiabatic limits \and Ideal reservoir limits \and Global equilibrium limits }
\maketitle

%\date{\today. {\bf File: {\jobname}.tex.}}
\section{Introduction}
Let us recollect some results on the most studied model of type of open systems, the simple symmetric exclusion process (SSEP) in an interval of size $N$, complemented by birth-death processes at the boundaries. Such birth-death processes are called ideal reservoirs of fixed densities $v_{0,\pm}$. As a result of the duality technique, we can obtain the following well-known expression which is extremely useful in order to investigate the macroscopic behavior of our particle system.
\begin{align}\label{g}
&\mathcal{E}^N\big[\eta(x,t)\big]\notag\\
=&E_x\big[u_0(N^{-1} Y(t))\mathbf{1}_{\tau^Y_0\wedge \tau_{N+1}^Y>t}\big]+v_{0,-}P_x\big(\tau^Y_0<t, \tau^Y_0<\tau^Y_{N+1}\big)\notag\\
&\hskip 4cm+v_{0,+}P_x\big(\tau^Y_{N+1}<t, \tau^Y_0>\tau^Y_{N+1}\big),
\end{align}
where $\mathcal{E}^N$ stands for the expectation with respect to our process, $P_x$ and $E_x$ denote the law and expectation of a simple symmetric random walk $Y$ on $\mathbb{Z}$ starting from $x$, $\tau_a^Y$ is the first time that the random walk $Y$ hits $a$ and $u_0\in C(0,1)$ represents the initial macroscopic density of particles. Namely, when the dynamics is speeded up in time by $N^2$, in the limit the above expression will give us the evolution equation which is described by the linear heat equation on $[0,1]$ with given boundary conditions. As the time goes faster than the order $N^2$, i.e. $t/N^2\to \infty$, it can be derived that the particle density is close to the linear interpolation between $v_{0.-}$ and $v_{0,+}$. Moreover, the propagation of chaos property holds for this model, it means that the number of particles at macroscopic distances become independent. The arguments for the proof can be found in \cite{GKMP}.

In the current article, we will regard the reservoirs as particle models of $N^{1+\alpha}$ sites for a fixed number $\alpha>0$. These particle systems interact with the SSEP by exchanging particles in a mean field fashion (see the next section for more details). Then under a suitable assumption on the initial state, when the time is scaled by $N^{2+\alpha'}$ for $\alpha'\in [0,\alpha)$, we will prove that in the limit $N\to\infty$, the particle system in the channel evolves like the SSEP in an interval of size $N$, imposed by ideal reservoirs of fixed densities at the extremes. A rigorous analysis to attain the ideal reservoir limit and the propagation of chaos property for our present particle system can be found in Section \ref{Ideal}.

In Section \ref{Adiabatic}, we will get the so-called adiabatic limit. More precisely, in the time scale $N^{2+\alpha}t$, the particle's density converges, as $N\to\infty$, to the linear interpolation between $v_-(t)$ and $v_+(t)$, where $v_-(t)$ and $v_+(t)$ changing in time are respectively the density of particles at the boundaries. The adiabatic limit can also be obtained when one considers the case where the boundary densities depend directly on time in the context of the SSEP, see \cite{DO}.

When the time scale is of order $N^{2+\alpha'}$ for $\alpha'>\alpha$, one can verify that in the global equilibrium limit, the particle density converges to the average of the initial densities $v_{0,-}$ and $v_{0,+}$ at the boundaries. Section \ref{Global} is devoted to this verification.

The propagation of chaos property for our present model has not been extended to the case when $\alpha'\geq\alpha$, hence it is still an open problem so far.

We also notice that if our particle system evolves as an SSEP on the semi-infinite domain $[0,\infty)$ with a reservoir at the origin then only one relevant time rescaling is studied. In the hydrodynamic limit, such system evolves according to the linear heat equation on $[0,\infty)$ whose solution can be represented probabilistically in terms of the Brownian motion sticky at the origin. The verification can be done by extending the result given by Amir in \cite{A} to the convergence of continuous time sticky random walks.

In \cite{N}, we consider the case $\alpha=0$ where each reservoir and the channel have the same size. If the time is scaled by $N^2$ while the space is scaled down by $N^{-1}$, we can obtain the hydrodynamic limit for such system as $N\to\infty$. That is a linear heat equation on $(0,1)$ with free boundary conditions.

\section{Model and sticky random walk}
\subsection{The model}
We study a simple symmetric exclusion process in the interval $\Lambda_N=[1,N]\cap \mathbb{N}$ which we will call ``the channel'', $N$ being a positive integer. It is complemented by reservoirs $\mathcal{S}_{\pm}$ of size $M\in\mathbb{N}^*$ at the extremes. Physically, the reservoirs are also particle systems which interact with our given system in the channel by exchanging particles. Thus the state space is $\Om^*=\{0,1\}^{\Lambda_N} \times \{0,1\}^{\mathcal S_-} \times \{0,1\}^{\mathcal S_+}$: the elements $\om$ of $\Om^*$ are sequences $\eta(x)$, $x \in \Lambda_N \cup \mathcal S_-\cup \mathcal S_+$, where $\eta(x)\in \{0,1\}$ is understood as the number of particles at site $x$, and the generator is
 \begin{align}
 \mathbf{L}^*f(\om) =&\frac 12 \sum_{x=1}^{N-1}[ f(\om^{x,x+1})-f(\om)]\notag\\
 &+\frac 1{2M} \sum_{z \in \mathcal S_-}[ f(\om^{1,z})-f(\om)]+\frac 1{2M} \sum_{z \in \mathcal S_+}[ f(\om^{N,z})-f(\om)]\notag\\
&+\sum_{x,y\in\mathcal{S}_+}c(x,y)[ f(\om^{x,y})-f(\om)]+\sum_{x,y\in\mathcal{S}_-}c(x,y)[ f(\om^{x,y})-f(\om)], \label{2.4}
 \end{align}
where $\om^{x,y}$ is obtained from $\om$ by exchanging $\eta(x)$ and $\eta(y)$; $c(x,y)$ is the jump rate from $x$ to $y$.

We denote by $n_{\pm}$ the number of particles in $\mathcal S_{\pm}$, respectively. Then one can see that the marginal over the variables $\eta(x), x \in \Lambda_N$, and $n_{\pm}$ has the law of the Markov process with generator $\mathbf{L}$ given by
 \begin{equation}
 \label{2.1}
 \mathbf{L}f(\om) =\frac 12 \sum_{x=1}^{N-1}[ f(\om^{x,x+1})-f(\om)]
 + c_N(\om)[ f(\om^N)-f(\om)] +  c_{1}(\om)[ f(\om^{1})-f(\om)]
 \end{equation}
where an element $\om$ of $\Om=\{0,1\}^{\Lambda_N} \times \{0,..,M\} \times \{0,..,M\}$ is a sequence $\eta(x)$, $x=1,..,N$, complemented by two integers $n_-$ and $n_+$ both in $[0,M]$;
 \begin{equation}
 \label{2.2}
 c_N(\om) = \frac 12\Big( 1-\frac{n_+}{M}\Big)\eta(N)+\frac 12\,\frac{n_+}{M}(1-\eta(N));
 \end{equation}
$\om^N$ has $\eta^N(N)=0$ and $n_+^N= n_++1$ if $\eta(N)=1$; and $\om^N$ has $\eta^N(N)=1$ and $n_+^N= n_+-1$ if $\eta(N)=0$. $c_1(\om)$ and $\om^1$ are defined similarly.
\subsection{Sticky random walk}\label{regularity}
In the sequel we identify $\eps=1/N$. Denote $\bar{\Lambda}_N:=[0,N+1]\cap\mathbb{N}$. Let us fix $\alpha>0$ and set hereafter $M=N^{1+\alpha}$. We write for $x\in\bar{\Lambda}_N$,
$$ \rho^{\eps}(x,t) = \mathbf{E}^\eps\Big[ \eta(x,t)\Big],$$
where $\eta(0,t):=\frac{n_-(t)}{M}, \,\eta(N+1,t):=\frac{n_+(t)}{M}$ and $\mathbf{E}^\eps$ stands for the expectation with respect to the process with generator \eqref{2.1}, the initial distribution will be specified later. Sometimes we use $\rho^{\eps}_0(x)$ instead of $\rho^{\eps}(x,0)$.

We denote $\rho^{\eps}_-(t):=\rho^{\eps}(0,t),\quad  \rho^{\eps}_+(t):=\rho^{\eps}(N+1,t)$.

Then for $x\in \Lambda_N$, we have
\begin{equation} \label{2.6}
 \frac{d}{dt}\rho^{\eps}(x,t) =  \frac 12 \Big( \rho^{\eps}(x-1,t)+\rho^{\eps}(x+1,t)
 -2\rho^{\eps}(x,t)\Big).
 \end{equation}

 We also have  
\begin{equation}
 \label{2.7}
 \frac{d}{dt}\rho^{\eps}_-(t) =  \frac 1{2M} \Big( \rho^{\eps}(1,t) -\rho^{\eps}_-(t)\Big), \quad \frac{d}{dt}\rho^{\eps}_+(t) =  \frac 1{2M} \Big( \rho^{\eps}(N,t)-\rho^{\eps}_+(t)\Big)
 \end{equation}

Moreover, there is a close relationship between our particle model and a sticky random walk.
\begin{defin}\label{SRW}
Sticky random walk $(X(t))_{t\geq 0}$ moving on $\bar{\Lambda}_N$ is a continuous time random walk with jump rates $c(x,x\pm 1)=\dfrac{1}{2}, \forall x\in\Lambda_N$ and $c(0,1)=c(N+1,N)=\dfrac{1}{2M}$.

More precisely, the generator $L$ of the random walk $X$ is given by
\begin{align}
Lf(x)=&\mathbf{1}_{1\leq x\leq N}\,\dfrac{1}{2}[f(x+1)+f(x-1)-2f(x)]\notag\\
+&\mathbf{1}_{x=0}\,\dfrac{1}{2M}[f(x+1)-f(x)]+\mathbf{1}_{x=N+1}\,\dfrac{1}{2M}[f(x-1)-f(x)].\label{gen}
\end{align}
\end{defin}

We call $E_x$ and $p_t(\cdot,\cdot)$ the expectation and the transition density of the random walk $X$ starting from $x$. Then by applying the same argument as in Proposition 3.1 and Corollary 3.2, \cite{N}, it is easy to check the following result which is similar to the expression \eqref{g}.
\begin{prop}\label{prop3.1}
For any $x\in \bar{\Lambda}_N$ and $t\geq 0$,
\begin{equation}\label{duality*}
\rho^{\eps}(x,t)=E_x\big[\rho^{\eps}(X(t),0)\big]=E_x\Big[\rho^{\eps}(X(t),0)\mathbf{1}_{\tau>t}+\rho^{\eps}(X(\tau),t-\tau)\mathbf{1}_{\tau\leq t}\Big],
\end{equation}
where $\tau$ is the first hitting time of $X$ into $\{0,N+1\}$.
\end{prop}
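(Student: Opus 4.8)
The plan is to establish \eqref{duality*} as a direct consequence of the duality between the exclusion process with generator \eqref{2.1} and the sticky random walk of Definition \ref{SRW}, exactly mirroring the argument of Proposition 3.1 and Corollary 3.2 in \cite{N}. The first step is to verify that the one-site occupation averages $\rho^\eps(x,t)$, extended to $\bar\Lambda_N$ via $\eta(0,t)=n_-(t)/M$ and $\eta(N+1,t)=n_+(t)/M$, satisfy a closed linear system of ODEs. This is already recorded in \eqref{2.6} and \eqref{2.7}: for interior sites $x\in\Lambda_N$ the exclusion interaction is linear in the occupation variables so no higher correlations enter, and the boundary equations \eqref{2.7} follow from the exchange rates $c_1,c_N$ in \eqref{2.2} after taking expectations. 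Collecting these, $\tfrac{d}{dt}\rho^\eps(\cdot,t)=L^{\!*}\rho^\eps(\cdot,t)$ where $L^{\!*}$ is seen to be precisely the generator \eqref{gen} of the sticky random walk $X$ acting on the spatial variable.

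The second step is the probabilistic representation of the solution of this linear ODE system. Since $L$ generates the Markov semigroup of $X$ on the finite state space $\bar\Lambda_N$, the unique solution of $\partial_t\rho^\eps=L\rho^\eps$ with initial datum $\rho^\eps(\cdot,0)$ is $\rho^\eps(x,t)=E_x[\rho^\eps(X(t),0)]=\sum_{y}p_t(x,y)\rho^\eps(y,0)$, which is the first equality in \eqref{duality*}. For the second equality I would condition on whether the walk has reached the absorbing-from-the-bulk set $\{0,N+1\}$ by time $t$. Introduce $\tau=\inf\{s\ge 0:X(s)\in\{0,N+1\}\}$. On $\{\tau>t\}$ the contribution is $\rho^\eps(X(t),0)\mathbf 1_{\tau>t}$ as is; on $\{\tau\le t\}$ apply the strong Markov property at $\tau$: the walk restarts from $X(\tau)\in\{0,N+1\}$, and using the first equality (the semigroup identity) with the remaining time $t-\tau$ gives that the conditional expectation of $\rho^\eps(X(t),0)$ equals $\rho^\eps(X(\tau),t-\tau)$. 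Summing the two contributions yields \eqref{duality*}. (One checks consistency: by the first equality $E_{X(\tau)}[\rho^\eps(X(t-\tau),0)]=\rho^\eps(X(\tau),t-\tau)$, so the strong Markov decomposition is exactly this splitting.)

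The only genuine point requiring care — the main obstacle, though a mild one — is justifying the interchange of expectation and the time-derivative/generator and the well-posedness of the ODE system on the (finite but $N$-dependent) state space: one must confirm that the extended array $\big(\rho^\eps(x,t)\big)_{x\in\bar\Lambda_N}$ really does close under \eqref{2.6}--\eqref{2.7} with no leakage into two-point functions, which is where the \emph{linearity} of the exclusion and boundary rates in the $\eta$'s is essential, and that the boundary sites $0,N+1$ are treated as honest states of $X$ with the small rate $1/(2M)$ rather than as absorbing points. Given \eqref{2.6}, \eqref{2.7}, and the identification of $L$ with the spatial operator governing $\rho^\eps$, the rest is the standard strong-Markov unfolding, so the statement follows by the same reasoning as Proposition 3.1 and Corollary 3.2 of \cite{N}.
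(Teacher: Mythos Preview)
Your proposal is correct and follows precisely the route the paper indicates: the paper does not spell out a proof but refers to Proposition~3.1 and Corollary~3.2 of \cite{N}, and your argument---closing the one-point functions via \eqref{2.6}--\eqref{2.7}, identifying the resulting linear operator with the sticky-walk generator \eqref{gen}, and then unfolding by the strong Markov property at $\tau$---is exactly that argument made explicit. There is nothing to add.
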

\section{Ideal reservoir limit}\label{Ideal}
\begin{thm}[Ideal reservoir limit]\label{Idea}
Let $u_0 \in C(0,1)$ with values in $[0,1]$ and $v_{0,\pm}\in [0,1]$. Suppose that at the initial time,
\begin{equation}\label{e}
\dfrac{n_{\pm}(0)}{M}\xrightarrow{\mathbf{P}^{\eps}}v_{0,\pm},\quad \rho^{\eps}(x,0) = u_0(\eps x), \forall x\in\Lambda_N.
 \end{equation}

Then for any $\alpha'\in [0,\alpha)$ and $T>0$, 
\begin{equation}\label{f}
\dlim_{N\to\infty} \sup_{x\in\Lambda_N}\sup_{t\in [0,T]}\big|\mathbf{E}^{\eps}\big[\eta(x,N^{2+\alpha'}t)\big]-\mathcal{E}^{\eps}\big[\eta(x,N^{2+\alpha'}t)\big]\big|=0,
\end{equation}
where $\mathcal{E}^{\eps}$ denotes the expectation with respect to the simple symmetric exclusion process in $\{0,1\}^{\Lambda_N}$ with reservoirs at the boundaries of fixed densities $v_{0,\pm}$. 
\end{thm}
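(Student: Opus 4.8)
The plan is to compare the two expected occupation profiles through their dual random-walk representations and to reduce everything to showing that the reservoir densities $\rho^{\eps}_{\pm}$ hardly move over the time scale $N^{2+\alpha'}$.

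\textbf{Strategy and reduction to the reservoir densities.} Write $\bar\rho^{\eps}(x,t):=\mathcal{E}^{\eps}[\eta(x,t)]$; by \eqref{g} it is represented through a simple symmetric random walk $Y$ on $\mathbb{Z}$ absorbed at $\{0,N+1\}$, the absorbed contributions being weighted by the constants $v_{0,\pm}$. Since the sticky walk $X$ of Definition~\ref{SRW} has the same jump rates as $Y$ on $\Lambda_N$, we may couple $X$ and $Y$ so that they agree up to the first hitting time $\tau=\tau^Y_0\wedge\tau^Y_{N+1}$ of $\{0,N+1\}$; then Proposition~\ref{prop3.1} gives the analogue of \eqref{g} for $\rho^{\eps}(x,t)$, $x\in\Lambda_N$, in which the constant $v_{0,-}$ appearing on the event $\{\tau^Y_0<t,\,\tau^Y_0<\tau^Y_{N+1}\}$ is replaced by $\rho^{\eps}_-(t-\tau^Y_0)$, and $v_{0,+}$ on $\{\tau^Y_{N+1}<t,\,\tau^Y_{N+1}<\tau^Y_0\}$ by $\rho^{\eps}_+(t-\tau^Y_{N+1})$. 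Subtracting, the interior terms cancel, and since $\rho^{\eps}_{\pm}$ and $v_{0,\pm}$ take values in $[0,1]$,
\[\big|\rho^{\eps}(x,t)-\bar\rho^{\eps}(x,t)\big|\le\sup_{0\le s\le t}\big|\rho^{\eps}_-(s)-v_{0,-}\big|+\sup_{0\le s\le t}\big|\rho^{\eps}_+(s)-v_{0,+}\big|.\]
Evaluating at the microscopic time $N^{2+\alpha'}t$ and taking suprema over $x\in\Lambda_N$ and $t\in[0,T]$, it suffices to prove $\sup_{0\le s\le N^{2+\alpha'}T}\big|\rho^{\eps}_{\pm}(s)-v_{0,\pm}\big|\to0$ for $\alpha'\in[0,\alpha)$.

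\textbf{Reducing to a hitting-time estimate for the sticky walk.} By hypothesis \eqref{e} and bounded convergence, $\rho^{\eps}_{\pm}(0)=\mathbf{E}^{\eps}[n_{\pm}(0)/M]\to v_{0,\pm}$, so it remains to bound $\sup_s\big|\rho^{\eps}_{\pm}(s)-\rho^{\eps}_{\pm}(0)\big|$. Using the first identity of Proposition~\ref{prop3.1} at the left boundary, $\rho^{\eps}_-(s)=\rho^{\eps}(0,s)=E_0\big[\rho^{\eps}_0(X(s))\big]$ with $\rho^{\eps}_0$ being $[0,1]$-valued; hence $\big|\rho^{\eps}_-(s)-\rho^{\eps}_-(0)\big|\le P_0\big(X(s)\ne0\big)\le P_0\big(X(s)\in\Lambda_N\big)+P_0\big(\tau^X_{N+1}\le s\big)$, where $\tau^X_{N+1}$ is the hitting time of $N+1$ by $X$. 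The sticky walk is reversible with respect to the measure $\mu$ on $\bar\Lambda_N$ with $\mu(0)=\mu(N+1)=M$ and $\mu(y)=1$ for $y\in\Lambda_N$, so $p_s(0,y)=M^{-1}p_s(y,0)\le M^{-1}$ for each $y\in\Lambda_N$; summing over the $N$ interior sites, $P_0\big(X(s)\in\Lambda_N\big)\le N/M=N^{-\alpha}$, uniformly in $s$.

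\textbf{The far reservoir is essentially unreachable within time $N^{2+\alpha'}$.} Decompose the path of $X$ started at $0$ into successive sojourns at $0$, each exponential of mean $2M$, alternating with excursions into $\Lambda_N$; by gambler's ruin each excursion reaches $N+1$ before returning to $0$ with probability $p=(N+1)^{-1}$, independently of the past. Hence $\tau^X_{N+1}$ stochastically dominates $\sum_{j=1}^{G}H_j$, where $G\sim\mathrm{Geom}(p)$ and the $H_j$ are i.i.d.\ exponential of mean $2M$, independent of $G$, so that for every integer $K$
\[P_0\big(\tau^X_{N+1}\le s\big)\le P(G<K)+P\Big(\textstyle\sum_{j=1}^{K}H_j\le s\Big)\le Kp+P\big(\mathrm{Poisson}(s/(2M))\ge K\big).\]
Take $s=N^{2+\alpha'}T$ and $K=\lceil N^{\beta}\rceil$ with any fixed $\beta\in\big(\max\{0,1+\alpha'-\alpha\},\,1\big)$, a nonempty interval precisely because $\alpha'<\alpha$: then $Kp\asymp N^{\beta-1}\to0$, while $K$ exceeds $s/(2M)\asymp N^{1+\alpha'-\alpha}$ by a fixed power of $N$, so the last term is $\le\exp(-cN^{\beta})$ for $N$ large by a Chernoff bound. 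Since $s\mapsto P_0\big(\tau^X_{N+1}\le s\big)$ is nondecreasing, the bound holds uniformly on $[0,N^{2+\alpha'}T]$. Together with the previous paragraph this yields $\sup_s\big|\rho^{\eps}_-(s)-\rho^{\eps}_-(0)\big|\to0$; the estimate for $\rho^{\eps}_+$ is identical, with $X$ started at $N+1$, and with the first paragraph the theorem follows.

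\textbf{The main difficulty.} The crux is the bound on $P_0\big(\tau^X_{N+1}\le s\big)$: the naive estimate $\big|\rho^{\eps}_-(s)-\rho^{\eps}_-(0)\big|\le\int_0^s\frac1{2M}\big|\rho^{\eps}(1,r)-\rho^{\eps}_-(r)\big|\,dr\le s/(2M)$ coming from \eqref{2.7} is worthless as soon as $\alpha'\ge\alpha-1$, so one must genuinely use that the window $N^{2+\alpha'}$ is much shorter than the time $\asymp NM=N^{2+\alpha}$ the sticky walk needs to cross the whole reservoir--channel--reservoir system: started from one reservoir, $X$ remains there for all but a vanishing fraction of the window and, with overwhelming probability, never reaches the other reservoir. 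This is also the reason why the regime $\alpha'\ge\alpha$ is genuinely different, and why the propagation-of-chaos statement there is left open.
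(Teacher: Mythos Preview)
Your proof is correct. The reduction in the first paragraph coincides with the paper's: both write $\mathbf{E}^\eps[\eta(x,t)]$ and $\mathcal{E}^\eps[\eta(x,t)]$ via the same absorbed walk $Y$ and observe that the difference is controlled by $\sup_{s\le t}|\rho^{\eps}_{\pm}(s)-v_{0,\pm}|$. The divergence is in how each argues that the reservoir densities do not move on the window $[0,N^{2+\alpha'}T]$.

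The paper proves a stronger, pathwise statement (its Lemma~\ref{fixed}): $\sup_{t\le T}|n_\pm(N^{2+\alpha'}t)/M-n_\pm(0)/M|\to0$ in $\mathbf{P}^\eps$-probability. The argument is a particle-counting one, bounding the number of particles that cross from one reservoir to the other by the transition probability estimate $p_t(N+1,0)\le t/(2MN)$, obtained by summing over return times to $N+1$. Your route is purely at the level of expectations: you apply the dual representation $\rho^{\eps}_-(s)=E_0[\rho^{\eps}_0(X(s))]$ directly, bound $P_0(X(s)\in\Lambda_N)\le N/M$ by reversibility (the paper records the same detailed-balance identity $Mp_t(0,x)=p_t(x,0)$ only later, in Proposition~\ref{uniconv}), and control $P_0(\tau^X_{N+1}\le s)$ by a clean renewal decomposition into exponential sojourns at $0$ and gambler's-ruin excursions, finished off with a Chernoff bound. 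This is more elementary and self-contained for Theorem~\ref{Idea} itself. What the paper's heavier Lemma~\ref{fixed} buys is that it is reused verbatim in the proof of propagation of chaos (Theorem~\ref{POCsm}), where one needs $\mathbf{E}^\eps\big[|n_\pm(N^{2+\alpha'}s)/M-v_{0,\pm}|\big]\to0$ and not merely the convergence of $\rho^{\eps}_{\pm}$; your expectation-level argument would not suffice there without an additional step.
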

\begin{proof}
Let us first verify the following result.
\begin{lem}\label{fixed}
For any $\delta>0, \alpha'\in [0,\alpha)$ and $T>0$,
$$ \dlim_{N\to\infty} \mathbf{P}^{\eps}\Big(\sup_{t\in [0,T]}\Big|\dfrac{n_{\pm}(N^{2+\alpha'}t)-n_{\pm}(0)}{M}\Big|\leq \delta\Big)=1. $$
\end{lem}
\begin{proof}
We define $\sharp(\mathcal{S}_+\to\mathcal{S}_-)$ as the number of particles which go from the right reservoir $\mathcal{S}_+$ to the left one $\mathcal{S}_-$ from the initial time to $N^{2+\alpha'}t$. Thus we have the estimate
$$ n_-(N^{2+\alpha'}t)\leq n_-(0)+N+\sharp(\mathcal{S}_+\to\mathcal{S}_-). $$

Then
$$ \dfrac{n_-(N^{2+\alpha'}t)- n_-(0)}{M} \leq \dfrac{N}{N^{1+\alpha}}+\dfrac{\sharp(\mathcal{S}_+\to\mathcal{S}_-)}{M}.$$

Let us estimate $p_t(N+1,0)$ the probability that the sticky random walk $X$ starting from $N+1$ reaches $0$ at time $t$. We call $\nu_t$ the number of times that $X$ reaches $N+1$ up to time $t$ and $\{\sigma^{(k)}_{N+1}, k\geq 1\}$ the successive return times to $N+1$ of the sticky random walk $X$ starting from $N+1$, i.e.
\begin{align*}
&\sigma^{(1)}_{N+1}=0\\
&\kappa^{(1)}_N=\inf\{s>\sigma^{(1)}_{N+1}, X(s)=N\}\\
&\sigma^{(2)}_{N+1}=\inf\{s>\kappa^{(1)}_N, X(s)=N+1\}\\
&\ldots
\end{align*}

Then we can write
\begin{align}
&p_t(N+1,0)=\dint_0^tP_{N+1}(X(t)=0, \tau_{N+1}^X<s, \tau_0^X=s)\,ds\notag\\
\leq&\dint_0^t\dsum_{n\geq 1}P_{N+1}(X(t)=0, \sigma^{(n)}_{N+1}<s<\sigma^{(n+1)}_{N+1}, \tau_0^X=s)\,ds\notag\\
\leq&\dint_0^t\dsum_{n\geq 1}\dint_0^sP_{N+1}(\sigma^{(n)}_{N+1}=\theta)P_{N+1}(X(t-\theta)=0, \tau_0^X<\sigma^{(2)}_{N+1})\,d\theta\,ds\notag\\
\leq&\dfrac{t}{2M}\dint_0^t\dint_0^sP_{N+1}(X(t-\theta)=0, \tau_0^X<\sigma^{(2)}_{N+1})\,d\theta\,ds\notag\\
\leq&\dfrac{t}{2M}\dint_0^t\dint_0^s\dint_0^{t-\theta}P_{N+1}(\tau_N^X=\upsilon)P_N(X(t-\theta-\upsilon)=0, \tau_0^X<\tau^X_{N+1})\,d\upsilon\,d\theta\,ds\notag\\
\leq&\dfrac{1}{N}\dfrac{t}{2M},\label{num}
\end{align}
where the third inequality follows from the fact that
\begin{align*}
\dsum_{n\geq 1}P_{N+1}(\sigma^{(n)}_{N+1}=\theta)=&\dsum_{n\geq 1}\dsum_{m\geq n}P_{N+1}(\sigma^{(n)}_{N+1}=\theta|\nu_t=m)P_{N+1}(\nu_t=m)\\
\leq&\dsum_{m\geq 1}mP_{N+1}(\nu_t=m)=E\big[\nu_t\big]\leq \dfrac{t}{2M}.
\end{align*}

By \eqref{num}, we have $\dsup_{t\in [0,T]}p_{N^{2+\alpha'}t}(N+1,0)\to 0$ for any $\alpha'\in [0,\alpha)$ and $T>0$. So in probability, 
$$ \dlim_{N\to\infty}\sup_{t\in [0,T]}\dfrac{\sharp(\mathcal{S}_+\to\mathcal{S}_-)}{M}=0. $$

This gives us that for any $\delta>0$ and $T>0$,
$$ \dlim_{N\to\infty} \mathbf{P}^{\eps}\Big(\sup_{t\in [0,T]}\dfrac{n_-(N^{2+\alpha'}t)-n_-(0)}{M}\leq \delta\Big)=1. $$

On the other hand, we also have
$$ n_-(N^{2+\alpha'}t)\geq \sharp(\mathcal{S}_-\to \mathcal{S}_-). $$

Then it follows that
$$ \dfrac{n_-(N^{2+\alpha'}t)- n_-(0)}{M} \geq -\dfrac{\sharp(\mathcal{S}_-\to\mathcal{S}_+)+\sharp(\mathcal{S}_-\to \text{channel})}{M}.$$

By \eqref{num}, we get $\dsup_{t\in [0,T]}\dsum_{x=1}^{N+1}p_{N^{2+\alpha'}t}(0,x)\to 0$ for any $\alpha'\in [0,\alpha)$ and $T>0$. Hence, in probability, 
$$ \dlim_{N\to\infty}\sup_{t\in [0,T]}\dfrac{\sharp(\mathcal{S}_-\to\mathcal{S}_+)+\sharp(\mathcal{S}_-\to \text{channel})}{M}=0. $$

It implies that for any $\delta>0$ and $T>0$,
$$ \dlim_{N\to\infty} \mathbf{P}^{\eps}\Big(\sup_{t\in [0,T]}\Big(-\dfrac{n_-(N^{2+\alpha'}t)-n_-(0)}{M}\Big)\leq \delta\Big)=1. $$
\end{proof}

As a consequence of Lemma \ref{fixed}, we obtain that for any $\alpha'\in [0,\alpha)$ and $T>0$,
\begin{equation}\label{res}
\dlim_{N\to\infty}\sup_{t\in [0,T]}\big|\rho^{\eps}_{\pm}(N^{2+\alpha'}t)-v_{0,\pm}\big|=0. 
\end{equation}

Since for $x\in \Lambda_N$, 
\begin{align*}
F_{N,x}(s)&=P_x(\tau^X_0\leq s, \tau^X_0<\tau^X_{N+1})=P_x(\tau^Y_0\leq s, \tau^Y_0<\tau^Y_{N+1})=:\mathds{F}_{N,x}(s),\\
G_{N,x}(s)&=P_x(\tau^X_{N+1}\leq s, \tau^X_0>\tau^X_{N+1})=P_x(\tau^Y_{N+1}\leq s, \tau^Y_0>\tau^Y_{N+1})=:\mathds{G}_{N,x}(s),
\end{align*}
then by Proposition \ref{prop3.1}, we can attain that
\begin{align}\label{main1}
&\mathbf{E}^{\eps}\big[\eta(x,t)\big]\notag\\
=&E_x\big[u_0(\eps X(t))\mathbf{1}_{\tau>t}\big]+E_x\big[\rho^{\eps}(X(\tau),t-\tau)\mathbf{1}_{\tau\leq t}\big]\notag\\
=&E_x\big[u_0(\eps Y(t))\mathbf{1}_{\tau_0^Y\wedge\tau_{N+1}^Y>t}\big]+\dint_0^t\rho^{\eps}_-(t-s)\,dF_{N,x}(s)+\dint_0^t\rho^{\eps}_+(t-s)\,dG_{N,x}(s)\notag\\
=&E_x\big[u_0(\eps Y(t))\mathbf{1}_{\tau_0^Y\wedge\tau_{N+1}^Y>t}\big]+\dint_0^t\rho^{\eps}_-(t-s)\,d\mathds{F}_{N,x}(s)+\dint_0^t\rho^{\eps}_+(t-s)\,d\mathds{G}_{N,x}(s).
\end{align}

On the other hand, we know that for $x\in\Lambda_N$,
\begin{align}\label{fi}
&\mathcal{E}^{\eps}\big[\eta(x,t)\big]\notag\\
=&E_x\big[u_0(\eps Y(t))\mathbf{1}_{\tau_0^Y\wedge\tau_{N+1}^Y>t}\big]+\dint_0^tv_{0,-}\,d\mathds{F}_{N,x}(s)+\dint_0^tv_{0,+}\,d\mathds{G}_{N,x}(s).
\end{align}

Therefore, it can be deduced that
\begin{align*}
&\big|\mathbf{E}^{\eps}\big[\eta(x,N^{2+\alpha'}t)\big]-\mathcal{E}^{\eps}\big[\eta(x,N^{2+\alpha'}t)\big]\big|\\
\leq &\dint_0^t\big|\rho^{\eps}_-(N^{2+\alpha'}(t-s))-v_{0,-}\big|\,d\mathds{F}_{N,x}(N^{2+\alpha'}s)\\
&\quad+\dint_0^t\big|\rho^{\eps}_+(N^{2+\alpha'}(t-s))-v_{0,+}\big|\,d\mathds{G}_{N,x}(N^{2+\alpha'}s).
\end{align*}

This implies \eqref{f} by using \eqref{res}.
\end{proof}
\begin{cor}
\begin{itemize}
\item[a)] For any $r \in [0,1]$ and $t>0$, $\rho^\eps([Nr],N^2t)$ and  $\rho^\eps_{\pm}(N^2t)$ have limits $u(r,t)$ and respectively $v_{0,\pm}$, where
\begin{equation}\label{linearheat}
\begin{cases}
%\begin{align*}
&u_t =  \frac 1{2} u_{rr},  \quad r\in [0,1]\\
&u(1,t)=v_{0,+}\\
&u(0,t)=v_{0,-}\\
&\dlim_{t\downarrow 0}u(r,t) =u_0(r), \quad \forall r\in (0,1).
 %\end{align*}
\end{cases}
\end{equation}
\item[b)] For any $\alpha'\in (0,\alpha)$, $r \in [0,1]$ and $t>0$, the sequences $\rho^\eps([Nr],N^{2+\alpha'}t)$ and  $\rho^\eps_{\pm}(N^{2+\alpha'}t)$ have limits $u(r,t)$ and respectively $v_{0,\pm}$, where
$$u(r,t)=\big(v_{0,+}-v_{0,-}\big)r+v_{0,-}.$$
\end{itemize}
\end{cor}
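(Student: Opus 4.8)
The plan is to derive both statements from Theorem \ref{Idea} combined with the duality formula \eqref{g} for the SSEP with ideal reservoirs. Fix $r\in[0,1]$ and $t>0$ and set $x_N:=[Nr]$. First I would dispose of the reservoir densities: the convergence $\rho^\eps_\pm(N^{2+\alpha'}t)\to v_{0,\pm}$ for every $\alpha'\in[0,\alpha)$ is exactly \eqref{res}, obtained as a consequence of Lemma \ref{fixed}; this in particular covers the endpoint cases $r\in\{0,1\}$ of the statements about $\rho^\eps([Nr],\cdot)$, since $\rho^\eps(0,\cdot)=\rho^\eps_-(\cdot)$, $\rho^\eps(N+1,\cdot)=\rho^\eps_+(\cdot)$, and at $r\in\{0,1\}$ the claimed profiles in (a) and (b) coincide with $v_{0,-}$ and $v_{0,+}$ by the boundary conditions. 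So from now on take $r\in(0,1)$, whence $x_N\in\Lambda_N$ for $N$ large.

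By Theorem \ref{Idea} (applied with $\alpha'=0$ for part (a) and with the given $\alpha'$ for part (b)), $\rho^\eps(x_N,N^{2+\alpha'}t)=\mathbf{E}^\eps[\eta(x_N,N^{2+\alpha'}t)]$ and $\mathcal{E}^\eps[\eta(x_N,N^{2+\alpha'}t)]$ have the same limit, so it suffices to evaluate the latter using \eqref{g}. The tool here is Donsker's invariance principle: started at $x_N$, the diffusively rescaled simple symmetric random walk $N^{-1}Y(N^2\cdot)$ converges in law, on $[0,1]$ with absorption at the endpoints, to a Brownian motion $B$ with generator $\tfrac12\partial_{rr}$ started at $r$, and jointly $\tau^Y_0/N^2\to\tau^B_0$, $\tau^Y_{N+1}/N^2\to\tau^B_1$ in law.

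For (a), set the time in \eqref{g} equal to $N^2t$ and pass to the limit term by term: each of the three summands is a bounded functional of the path that is $P_r$-almost surely continuous, because Brownian motion on $[0,1]$ has atomless hitting-time laws, so the boundaries of the events $\{\tau^B_0\wedge\tau^B_1>t\}$, $\{\tau^B_0<t,\,\tau^B_0<\tau^B_1\}$, $\{\tau^B_1<t,\,\tau^B_0>\tau^B_1\}$ are $P_r$-null. One gets that $\mathcal{E}^\eps[\eta(x_N,N^2t)]$ converges to
\[
E_r\big[u_0(B_t)\mathbf{1}_{\tau^B_0\wedge\tau^B_1>t}\big]+v_{0,-}\,P_r\big(\tau^B_0<t,\,\tau^B_0<\tau^B_1\big)+v_{0,+}\,P_r\big(\tau^B_1<t,\,\tau^B_0>\tau^B_1\big),
\]
which is precisely the Feynman--Kac representation of the solution $u(r,t)$ of \eqref{linearheat}; the initial datum is attained since $u_0\in C(0,1)$ and $B_t\to r$ as $t\downarrow0$. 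For (b), set the time equal to $N^{2+\alpha'}t$ with $\alpha'>0$: the exit time of $\{0,N+1\}$ from an interior starting point is of order $N^2\ll N^{2+\alpha'}$, so the first term of \eqref{g} tends to $0$ and the indicators $\mathbf{1}_{\tau^Y_\bullet<N^{2+\alpha'}t}$ in the remaining two tend to $1$; hence $\mathcal{E}^\eps[\eta(x_N,N^{2+\alpha'}t)]\to v_{0,-}\,P_r(\tau^B_0<\tau^B_1)+v_{0,+}\,P_r(\tau^B_1<\tau^B_0)=v_{0,-}(1-r)+v_{0,+}r$, the last identity being the gambler's-ruin (harmonic-function) computation for Brownian motion on $[0,1]$. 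Together with Theorem \ref{Idea} this is exactly $u(r,t)=(v_{0,+}-v_{0,-})r+v_{0,-}$.

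The one genuinely delicate point is the term-by-term passage to the limit in \eqref{g}: verifying that the discrete hitting-time functionals converge to their Brownian counterparts, which requires the atomlessness of the Brownian hitting-time distributions together with uniform control of the discrete exit times --- the latter already quantitatively available through estimates of the type \eqref{num}. Everything else is routine bookkeeping layered on top of Theorem \ref{Idea}.
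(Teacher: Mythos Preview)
Your proposal is correct and follows the paper's overall strategy of reducing to the ideal-reservoir SSEP via Theorem~\ref{Idea} and then analysing \eqref{g}/\eqref{fi}. For part~(a) the paper simply declares it ``a direct consequence of Theorem~\ref{Idea},'' deferring to the classical hydrodynamic limit for SSEP with fixed-density reservoirs; you supply the Donsker/Feynman--Kac argument explicitly, which is the standard way to justify that classical fact, so there is no real divergence.

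For part~(b) there is a small but genuine difference in execution. The paper stays entirely at the discrete level: it uses the quantitative tail bound \eqref{prob1}, $P_x(\tau_0^Y\wedge\tau_{N+1}^Y>t)\le Cx/\sqrt{t}$, to kill the first term of \eqref{fi} and to show directly that $\mathds{F}_{N,x}(N^{2+\alpha'}t)\to 1-r$ via the discrete gambler's-ruin identity $P_x(\tau_0^Y<\tau_{N+1}^Y)=1-x/(N+1)$, with no passage to Brownian motion. Your route through Donsker and the Brownian gambler's-ruin formula reaches the same endpoint but inserts an extra limiting layer; the paper's purely discrete argument is shorter and sidesteps the continuity-of-hitting-time-functionals verification that you yourself flag as the ``delicate point.''
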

\begin{proof} Since Part a) is a direct consequence of Theorem \ref{Idea}, it remains to check Part b). We observe that for any $x\in\Lambda_N$,
\begin{equation}\label{prob1}
P_x(\tau_0^Y\wedge\tau_{N+1}^Y>t)\leq \dfrac{Cx}{\sqrt{t}}
\end{equation}
then the first term on the right hand side of \eqref{fi} converges to $0$ as $N\to\infty$, for any $\alpha'\in (0,\alpha)$.

Moreover, the estimate \eqref{prob1} also allows us to obtain that
$$|\mathds{F}_{N,x}(N^{2+\alpha'}t)-P_x(\tau_0^Y<\tau_{N+1}^Y)|\leq P_x(\tau_0^Y>N^{2+\alpha'}t)\leq \dfrac{Cx}{N^{1+\frac{\alpha'}{2}}\sqrt{t}}.$$

Since
$$ P_x(\tau_0^Y<\tau_{N+1}^Y)=1-\dfrac{x}{N+1}, $$
it implies that for any $(r,t)\in [0,1]\times (0,\infty)$, 
$$ \dlim_{N\to\infty}\dint_0^{N^{2+\alpha'}t}v_{0,-}\,d\mathds{F}_{N,x}(s) = v_{0,-}(1-r). $$

Analogously, we also have
$$ \dlim_{N\to\infty}\dint_0^{N^{2+\alpha'}t}v_{0,+}\,d\mathds{G}_{N,x}(s) = v_{0,+}r. $$

Therefore, the sequence $ \rho^{\eps}([Nr], N^{2+\alpha'}t)$ converges pointwise on $[0,1]\times (0,\infty)$ to $u$, where
$$u(r,t)=\big(v_{0,+}-v_{0,-}\big)r+v_{0,-}.$$
\end{proof}

\begin{thm}[Propagation of chaos]\label{POCsm}
Assume that at the initial time, \eqref{e} is satisfied. Then for any $\alpha'\in [0,\alpha)$, any distinct points $x_1, x_2$ in $\Lambda_N$ and $t>0$,
\begin{align*}
\dlim_{N\to \infty} &\Big|\mathbf{E}^{\eps}\big[\eta(x_1, N^{2+\alpha'}t)\,\eta(x_2, N^{2+\alpha'}t)\big]\\
&-\mathbf{E}^{\eps}[\eta(x_1,N^{2+\alpha'}t)]\,\mathbf{E}^{\eps}[\eta(x_2,N^{2+\alpha'}t)]\Big|=0.
\end{align*}
\end{thm}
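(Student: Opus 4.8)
The plan is to reduce the statement to the corresponding propagation of chaos property for the SSEP with \emph{ideal} reservoirs of fixed densities $v_{0,\pm}$, which is known from \cite{GKMP}, and then transfer it to the present model using a duality representation for the two-point correlation together with the sticky-random-walk estimates already established. First I would write down the analogue of Proposition \ref{prop3.1} at the level of pairs: by the same duality argument as in \cite{N}, the correlation $\mathbf{E}^{\eps}\big[\eta(x_1,t)\,\eta(x_2,t)\big]$ admits a representation in terms of two coalescing sticky random walks $X_1,X_2$ on $\bar\Lambda_N$ started from $x_1,x_2$ — coalescing because of the exclusion rule — where, upon the first hitting time of $\{0,N+1\}$ by either walk, the corresponding factor is replaced by $\rho^{\eps}_{\pm}$ evaluated at the (reservoir) endpoint at the residual time, and where, if both walks survive in $\Lambda_N$, the contribution is $E_{x_1,x_2}\big[u_0(\eps X_1(t))u_0(\eps X_2(t))\mathbf{1}_{\text{no hit}}\big]$ up to the correction coming from the event that the two walks have coalesced. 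The product $\mathbf{E}^{\eps}[\eta(x_1,t)]\,\mathbf{E}^{\eps}[\eta(x_2,t)]$ has, via Proposition \ref{prop3.1}, the representation of two \emph{independent} sticky random walks with the same structure.

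Next I would subtract the two representations. The difference splits into two types of terms. The first type measures the discrepancy between coalescing and independent pairs of sticky walks; this is exactly the mechanism that produces propagation of chaos for the bulk SSEP, and since the sticky walk and the plain random walk $Y$ agree in law on all events concerning hitting of $\{0,N+1\}$ and positions before that hitting time (this is the identity $F_{N,x}=\mathds F_{N,x}$, $G_{N,x}=\mathds G_{N,x}$ used in \eqref{main1}), this first type of term is controlled by the known decorrelation estimates for the SSEP with ideal reservoirs — the contribution of coalescence to a two-point function at fixed macroscopic separation vanishes as $N\to\infty$. The second type of term involves the replacement of $\rho^{\eps}_{\pm}(N^{2+\alpha'}(t-s))$ by the constants $v_{0,\pm}$, and these are handled verbatim as in the proof of Theorem \ref{Idea}: by Lemma \ref{fixed} and its consequence \eqref{res}, $\sup_{s\le t}\big|\rho^{\eps}_{\pm}(N^{2+\alpha'}s)-v_{0,\pm}\big|\to 0$, and since the relevant $dF$, $dG$ increments are probability measures in $s$, the error is $o(1)$ uniformly. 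Assembling these, the left-hand side is bounded by (the ideal-reservoir decorrelation term) $+\,o(1)$, and both go to $0$.

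The main obstacle I expect is making the two-walk duality representation rigorous for the \emph{present} model and, in particular, controlling the coalescence term uniformly. The subtlety is that in the open system a walk that reaches a reservoir does not simply ``die'': its factor is replaced by $\rho^{\eps}_{\pm}$, so a careful bookkeeping is needed to separate (i) the event that the two walks coalesce before either reaches the boundary, (ii) the event that one walk reaches the boundary while the other is still in $\Lambda_N$, and (iii) the event that both reach the boundary. On (i) one needs the classical fact that two SSEP-dual coalescing walks starting at macroscopic separation $|x_1-x_2|\sim N$ coalesce with probability $o(1)$ before hitting $\{0,N+1\}$ on the time scale $N^{2+\alpha'}t$; here the identity with the plain walk $Y$ is what lets us import the estimate from \cite{GKMP}. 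On (ii)–(iii) one factorizes the post-hitting evolution and reuses \eqref{res}; the only nuisance is that after one walk is ``absorbed'' at a reservoir, the remaining single walk's factor must still be compared with its independent counterpart, which again reduces to the one-point estimate \eqref{res} and to the single-walk coalescence being irrelevant. Once the representation and this case analysis are in place, the limit follows from Lemma \ref{fixed} exactly as in Theorem \ref{Idea}, with no additional ingredients beyond the cited bulk propagation of chaos.
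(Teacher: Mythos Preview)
Your strategy is the same as the paper's: reduce to the known propagation of chaos for the ideal-reservoir SSEP via a two-walk duality, and control the discrepancy between the two models by Lemma~\ref{fixed}. The paper organizes this more cleanly than your case analysis (i)--(iii): it inserts $\mathcal{E}^{\eps}$ and writes
\[
\mathbf{E}^{\eps}[\eta_1\eta_2]-\mathbf{E}^{\eps}[\eta_1]\mathbf{E}^{\eps}[\eta_2]
=\big(\mathbf{E}^{\eps}[\eta_1\eta_2]-\mathcal{E}^{\eps}[\eta_1\eta_2]\big)
+\big(\mathcal{E}^{\eps}[\eta_1\eta_2]-\mathcal{E}^{\eps}[\eta_1]\mathcal{E}^{\eps}[\eta_2]\big)
+\big(\mathcal{E}^{\eps}[\eta_1]\mathcal{E}^{\eps}[\eta_2]-\mathbf{E}^{\eps}[\eta_1]\mathbf{E}^{\eps}[\eta_2]\big),
\]
so the middle term is the cited \cite{GKMP} result, the last term is Theorem~\ref{Idea}, and only the first term needs work. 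For that, the paper runs the pair of \emph{stirring} walks $(X_1,X_2)$ and stops at $\hat\tau=\hat\tau_0\wedge\hat\tau_{N+1}$, the first time \emph{either} component touches $\{0,N+1\}$; at that moment one factor becomes $n_{\pm}(t-\hat\tau)/M$ and the other is still a single-walk expectation in the channel. Since the stirring dynamics in the channel is identical for both models, the difference $\mathbf{E}^{\eps}[\eta_1\eta_2]-\mathcal{E}^{\eps}[\eta_1\eta_2]$ is bounded by $\int \mathbf{E}^{\eps}\big[|n_{\pm}/M-v_{0,\pm}|\big]\,d\mathbb{F}$ (resp.\ $d\mathbb{G}$), and Lemma~\ref{fixed} finishes. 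This single stopping time replaces your three-case bookkeeping entirely.

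Two small corrections to your write-up. First, the dual walks for SSEP are \emph{stirring} (exclusion) walks, not coalescing walks; they never merge, and your sentence ``coalesce with probability $o(1)$'' is not the mechanism --- what is used is simply that the ideal-reservoir two-point covariance vanishes, a fact you cite anyway. Second, the theorem is stated for \emph{any} distinct $x_1,x_2\in\Lambda_N$, not only macroscopically separated ones; the paper's reduction to \eqref{enough} handles this uniformly, whereas your phrasing ``at fixed macroscopic separation $|x_1-x_2|\sim N$'' is an unnecessary restriction.
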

\begin{proof}
It is well-known that we have the propagation of chaos property for the simple symmetric exclusion process in $\{0,1\}^{\Lambda_N}$ with reservoirs at the boundaries of fixed densities $v_{0,\pm}$. Therefore, in view of this result and \eqref{f}, in order to obtain Theorem \ref{POCsm}, it suffices to prove that for any distinct points $x_1, x_2$ in $\Lambda_N$ and $T>0$,
\begin{align}\label{enough}
\dlim_{N\to\infty}\sup_{t\in [0,T]}&\Big|\mathbf{E}^{\eps}\big[\eta(x_1, N^{2+\alpha'}t)\,\eta(x_2, N^{2+\alpha'}t)\big]\notag\\
&\hskip 1cm-\mathcal{E}^{\eps}\big[\eta(x_1, N^{2+\alpha'}t)\,\eta(x_2, N^{2+\alpha'}t)\big]\Big|=0.
\end{align}

Let us consider the process $(X_1(t), X_2(t))_{t\geq 0}$ of two stirring walks sticky at $\{0,N+1\}$. We call $\hat{\tau}_0, \hat{\tau}_{N+1}$ the first time when $(X_1,X_2)$ reaches $0$ and $N+1$, respectively. Let us denote $\hat{\tau}=\hat{\tau}_0\wedge\hat{\tau}_{N+1}$ and
\begin{align*}
\mathbb{F}(s)&=\mathbb{P}_{(x_1,x_2)}(\hat{\tau}_0\leq s, \hat{\tau}_0<\hat{\tau}_{N+1})\\
\mathbb{G}(s)&=\mathbb{P}_{(x_1,x_2)}(\hat{\tau}_{N+1}\leq s, \hat{\tau}_0>\hat{\tau}_{N+1}).
\end{align*} 

Then up to $\hat{\tau}$, we have the following duality
\begin{align*}
&\mathbf{E}^{\eps}\big[\eta(x_1, t)\,\eta(x_2, t)\big]=\mathbb{E}_{(x_1,x_2)}\mathbb{E}_{(x_1,x_2)}\mathbf{E}^{\eps}\big[\eta_{t-\hat{\tau}}(X_1(\hat{\tau}))\eta_{t-\hat{\tau}}(X_2(\hat{\tau}))\big]\\
=&\dint_0^tE_{x_1}\mathbf{E}^{\eps}\Big[\eta_{t-s}(X_1(s))\dfrac{n_-(t-s)}{M}\Big]\,d\mathbb{F}(s)\\
&\hskip 3cm+\dint_0^tE_{x_1}\mathbf{E}^{\eps}\Big[\eta_{t-s}(X_1(s))\dfrac{n_+(t-s)}{M}\Big]\,d\mathbb{G}(s).
\end{align*}

It follows that
\begin{align*}
&\Big|\mathbf{E}^{\eps}\big[\eta(x_1, N^{2+\alpha'}t)\,\eta(x_2, N^{2+\alpha'}t)\big]-\mathcal{E}^{\eps}\big[\eta(x_1, N^{2+\alpha'}t)\,\eta(x_2, N^{2+\alpha'}t)\big]\Big|\\
=&\Big|\dint_0^{N^{2+\alpha'}t}E_{x_1}\mathbf{E}^{\eps}\Big[\eta_{N^{2+\alpha'}t-s}(X_1(s))\Big(\dfrac{n_-(N^{2+\alpha'}t-s)}{M}-v_{0,-}\Big)\Big]\,d\mathbb{F}(s)\\
&\hskip 0.3cm+\dint_0^{N^{2+\alpha'}t}E_{x_1}\mathbf{E}^{\eps}\Big[\eta_{N^{2+\alpha'}(t-s)}(X_1(s))\Big(\dfrac{n_+(N^{2+\alpha'}t-s)}{M}-v_{0,+}\Big)\Big]\,d\mathbb{G}(s)\Big|\\
%=&\Big|-\dint_0^tE_{x_1}\mathbf{E}^{\eps}\Big[\eta_{N^{2+\alpha'}s}(X_1(N^{2+\alpha'}(t-s)))\Big(\dfrac{n_-(N^{2+\alpha'}s)}{M}-\rho_-\Big)\Big]\,d\mathbb{F}(N^{2+\alpha'}(t-s))\\
%&\hskip 1cm-\dint_0^tE_{x_1}\mathbf{E}^{\eps}\Big[\eta_{N^{2+\alpha'}s}(X_1(N^{2+\alpha'}(t-s)))\Big(\dfrac{n_+(N^{2+\alpha'}s)}{M}-\rho_+\Big)\Big]\,d\mathbb{G}(N^{2+\alpha'}(t-s))\Big|\\
\leq&\dint_0^t\mathbf{E}^{\eps}\Big[\Big|\dfrac{n_-(N^{2+\alpha'}s)}{M}-v_{0,-}\Big|\Big]\,d\mathbb{F}(N^{2+\alpha'}(t-s))\\
&\hskip 1cm+\dint_0^t\mathbf{E}^{\eps}\Big[\Big|\dfrac{n_+(N^{2+\alpha'}s)}{M}-v_{0,+}\Big|\Big]\,d\mathbb{G}(N^{2+\alpha'}(t-s)).
\end{align*}

In view of Lemma \ref{fixed}, it implies \eqref{enough} and this completes our proof.
\end{proof}
\section{Adiabatic limit}\label{Adiabatic}
For the case when $\alpha'=\alpha$, the adiabatic limit is obtained and its verification can be found in this section.
\begin{thm}[Adiabatic limit]\label{PDE}
Assume that at the initial time, \eqref{e} is satisfied. Then for any $r \in [0,1]$ and $t>0$,
$\rho^\eps([Nr],N^{2+\alpha}t)$ and  $\rho^\eps_{\pm}(N^{2+\alpha}t)$ have limits $u(r,t)$ and respectively $v_{\pm}(t)$, where
$$u(r,t)=\big(v_+(t)-v_-(t)\big)r+v_-(t),$$
with $v_{\pm}(t)$ such that $\dlim_{t\to 0}v_{\pm}(t) = v_{0,\pm}$ and 
$$\begin{cases}
\dfrac{d}{dt}v_-(t)&=\dfrac{1}{2}\big(v_+(t)-v_-(t)\big)\\
\dfrac{d}{dt}v_+(t)&=-\dfrac{1}{2}\big(v_+(t)-v_-(t)\big).
\end{cases}$$
\end{thm}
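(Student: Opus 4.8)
The plan is to run the same duality argument as in Theorem \ref{Idea}, but now carrying the boundary densities along rather than freezing them, and to close the system through the coupled ODEs \eqref{2.7}. First I would record the exact identity from Proposition \ref{prop3.1} in the form used in \eqref{main1}: for $x\in\Lambda_N$ and any $t$,
\begin{align*}
\rho^{\eps}(x,t)=E_x\big[u_0(\eps Y(t))\mathbf{1}_{\tau_0^Y\wedge\tau_{N+1}^Y>t}\big]+\dint_0^t\rho^{\eps}_-(t-s)\,d\mathds{F}_{N,x}(s)+\dint_0^t\rho^{\eps}_+(t-s)\,d\mathds{G}_{N,x}(s),
\end{align*}
evaluated at $t=N^{2+\alpha}t$. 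In this regime the first term vanishes in the limit by the Gaussian bound \eqref{prob1} (exactly as in Part b) of the Corollary), and $\mathds{F}_{N,x}(N^{2+\alpha}t)\to 1-r$, $\mathds{G}_{N,x}(N^{2+\alpha}t)\to r$ pointwise, with the mass of $d\mathds{F}_{N,x}$, $d\mathds{G}_{N,x}$ on the macroscopic time interval $[0,\delta N^{2+\alpha}]$ uniformly small. Hence, \emph{provided} $\rho^{\eps}_{\pm}(N^{2+\alpha}\cdot)$ converges uniformly on compacts to some limits $v_{\pm}$, one gets $\rho^{\eps}([Nr],N^{2+\alpha}t)\to (v_+(t)-v_-(t))r+v_-(t)$, which is the asserted linear profile.

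So the heart of the matter is the convergence of the boundary densities and the identification of their limit ODEs. Rescaling \eqref{2.7} in time $t\mapsto N^{2+\alpha}t$ and using $M=N^{1+\alpha}$ gives
$$\dfrac{d}{dt}\rho^{\eps}_-(N^{2+\alpha}t)=\dfrac{N}{2}\big(\rho^{\eps}(1,N^{2+\alpha}t)-\rho^{\eps}_-(N^{2+\alpha}t)\big),\qquad \dfrac{d}{dt}\rho^{\eps}_+(N^{2+\alpha}t)=\dfrac{N}{2}\big(\rho^{\eps}(N,N^{2+\alpha}t)-\rho^{\eps}_+(N^{2+\alpha}t)\big).$$
The factor $N$ looks singular, but the bracket is $O(1/N)$: from the duality formula $\rho^{\eps}(1,\cdot)$ is, to leading order, the harmonic interpolation evaluated at the site adjacent to the left boundary, so $\rho^{\eps}(1,N^{2+\alpha}t)-\rho^{\eps}_-(N^{2+\alpha}t)\approx \tfrac1N\big(\rho^{\eps}_+-\rho^{\eps}_-\big)$ up to a bulk correction that vanishes after the initial layer. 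More precisely I would write $\rho^{\eps}(1,s)=\rho^{\eps}_-(s)+\tfrac1{N+1}\big(\rho^{\eps}_+(s)-\rho^{\eps}_-(s)\big)+\text{Err}_-(s)$ with $\text{Err}_-$ controlled by the harmonic-measure/Gaussian estimates (the term $E_{1}[u_0(\eps Y)\mathbf 1_{\cdot>s}]$ together with the small mass of $d\mathds F,d\mathds G$ near $s=0$), so that $\sup_{s\le N^{2+\alpha}T}N|\text{Err}_\pm(s)|\to 0$ after removing an arbitrarily short initial time interval. Feeding this back, the rescaled boundary densities satisfy an ODE system that is an $o(1)$ perturbation of
$$\dfrac{d}{dt}v_-(t)=\dfrac12\big(v_+(t)-v_-(t)\big),\qquad \dfrac{d}{dt}v_+(t)=-\dfrac12\big(v_+(t)-v_-(t)\big),$$
with initial data $v_{\pm}(0)=v_{0,\pm}$ coming from \eqref{e}. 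A Gronwall/Arzelà–Ascoli argument (the rescaled $\rho^{\eps}_\pm$ are Lipschitz in $t$ uniformly in $N$, by the displayed ODE and $0\le\rho^{\eps}\le1$) then yields uniform convergence on $[0,T]$ to the unique solution of the limit system, and uniqueness follows because the system is linear — in fact $v_+(t)+v_-(t)$ is conserved and $v_+(t)-v_-(t)=(v_{0,+}-v_{0,-})e^{-t}$.

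I would organize the proof as: (i) state the duality identity and the ODEs for $\rho^{\eps}_\pm$; (ii) prove the harmonic-interpolation estimate for $\rho^{\eps}(1,\cdot)$ and $\rho^{\eps}(N,\cdot)$ with the quantitative error bound, using \eqref{prob1} and the smallness of the near-origin mass of $d\mathds F_{N,x},d\mathds G_{N,x}$ on the $N^{2+\alpha}$ scale; (iii) deduce tightness (equi-Lipschitz continuity) of $t\mapsto\rho^{\eps}_\pm(N^{2+\alpha}t)$ and extract a limit; (iv) pass to the limit in the ODEs to identify any subsequential limit with the (unique) solution of the stated system, hence full convergence; (v) plug the boundary convergence back into the duality identity to get the linear bulk profile $u(r,t)$. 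The main obstacle is step (ii)–(iii): controlling the $O(1)$-after-multiplication-by-$N$ quantity $N\big(\rho^{\eps}(1,s)-\rho^{\eps}_-(s)\big)$ uniformly for $s$ up to $N^{2+\alpha}T$, including a careful treatment of the initial boundary layer $s\in[0,\delta N^{2+\alpha}]$ where the harmonic approximation is not yet accurate; everything else is either a direct rescaling of earlier estimates or elementary linear ODE theory.
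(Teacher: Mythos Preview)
Your architecture --- duality identity, tightness of the boundary densities, identification of the limit ODE, plug back to get the linear profile --- matches the paper's. The difference is in how tightness and identification are obtained, and there your proposal has a genuine gap.

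You claim that the rescaled $\tilde\rho^{\eps}_\pm(t):=\rho^{\eps}_\pm(N^{2+\alpha}t)$ are Lipschitz uniformly in $N$ ``by the displayed ODE and $0\le\rho^{\eps}\le 1$''. But the displayed ODE reads $\tfrac{d}{dt}\tilde\rho^{\eps}_-=\tfrac{N}{2}\big(\rho^{\eps}(1,\cdot)-\rho^{\eps}_-\big)$, so the trivial bound only yields Lipschitz constant $N/2$. To get a uniform bound you would need $\rho^{\eps}(1,\cdot)-\rho^{\eps}_-=O(1/N)$, i.e.\ precisely the harmonic--interpolation estimate in your step~(ii). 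That estimate in turn requires controlling $\int_0^s\big[\rho^{\eps}_-(s-\theta)-\rho^{\eps}_-(s)\big]\,d\mathds F_{N,1}(\theta)$, which needs some a~priori regularity of $\rho^{\eps}_-$ in time --- exactly what you are trying to prove. (Also, the mass of $d\mathds F_{N,1}$ is not ``small near $s=0$''; the walk from $1$ typically hits $0$ quickly, so most of the mass sits there and the issue is the time variation of $\rho^{\eps}_-$ over that support.)

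The paper breaks this circularity in two places. First, equicontinuity is obtained independently of the ODE by a direct probabilistic estimate on the sticky walk (Proposition~\ref{uniconv}): $|\rho^{\eps}_-(t)-\rho^{\eps}_-(s)|\le N^{-(1+\alpha)}\min(C\sqrt{|t-s|},N)+CN^{-(2+\alpha)}|t-s|$, which after rescaling gives uniform $\tfrac12$-H\"older continuity and hence Arzel\`a--Ascoli. Second, instead of estimating $N\big(\rho^{\eps}(1,\cdot)-\rho^{\eps}_-\big)$ pointwise, the paper integrates a spatial average $\psi^{\alpha}_{\eps,h}(l,t)$ so that the boundary gradient term becomes exactly $\rho^{\eps}_-(N^{2+\alpha}t_0)-\rho^{\eps}_-(N^{2+\alpha}t)$ via \eqref{2.7}, leaving only a \emph{bulk} discrete gradient $\tfrac{1}{2h}\big(\rho^{\eps}(L+1,\cdot)-\rho^{\eps}(L-H+1,\cdot)\big)$ to pass to the limit (Proposition~\ref{prop3.8}). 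This sidesteps the delicate control of $N\,\mathrm{Err}_\pm$ altogether. Your route is salvageable if you first prove something like Proposition~\ref{uniconv} and then use it inside the $d\mathds F_{N,1}$ integral (one gets $N\int\sqrt\theta\,d\mathds F_{N,1}(\theta)/N^{1+\alpha}\lesssim N^{-\alpha}\log N$), but as written the tightness step is circular and step~(ii) is not justified.
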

\begin{proof}
At first, we study the existence of the limit of the sequence $\rho_{\pm}^{\eps}(N^{2+\alpha}t)$. 

Let $Y$ be a simple symmetric random walk on $\mathbb{Z}$ starting from $x$. We denote by $Y^{\rm rf}$ the simple random walk $Y$ reflected at $0$ and $N+1$ and define the local time spent by $Y^{\rm rf}$ at $0$ and $N+1$ by
  \begin{equation*}
 \mathbf{T}(0,N+1;t;Y^{\rm rf}) = \int_0^t \big( \mathbf 1_{Y^{\rm rf}(s)=0}+ \mathbf 1_{Y^{\rm rf}(s)=N+1}\big)\,ds.
 \end{equation*}

Then it is shown in Proposition 3.3, \cite{N}, that the sticky random walk $X$ can be realized by setting
 \begin{equation}\label{realize}
X\Big(t+ (2M-1)\mathbf{T}(0,N+1;t;Y^{\rm rf})\Big)=Y^{\rm rf}(t).
 \end{equation}

This setting enables us to obtain the following estimates.
\begin{prop}\label{uniconv}
There exists a constant $C$ such that for any $s,t\geq 0$,
$$|\rho^{\eps}_-(t)-\rho^{\eps}_-(s)|\leq \dfrac{1}{N^{1+\alpha}}\min(C\sqrt{|t-s|},N)+\dfrac{C}{N^{2+\alpha}}|t-s|$$
$$|\rho^{\eps}_+(t)-\rho^{\eps}_+(s)|\leq \dfrac{1}{N^{1+\alpha}}\min(C\sqrt{|t-s|},N)+\dfrac{C}{N^{2+\alpha}}|t-s|.$$
\end{prop}
\begin{proof}
Making a similar argument as presented in Proposition 3.6, \cite{N}, enables us to obtain the desired result. Here we just indicate some remarks. First, by the action of the generator $L$, we can verify the following equalities
\begin{align*}
p_t(x,y)&=p_t(y,x), \forall x,y\in\Lambda_N\\
p_t(0,N+1)&=p_t(N+1,0)\\
Mp_t(0,x)&=p_t(x,0), \forall x\in\Lambda_N\\
Mp_t(N+1,x)&=p_t(x,N+1), \forall x\in\Lambda_N.
\end{align*}

Then in view of \eqref{duality*} and the above equalities, we can write
\begin{align*}
\rho^{\eps}(0,t) =&\rho^{\eps}(0,0)p_t(0,0)+\dfrac{1}{M}\dsum_{x=1}^N\rho^{\eps}(x,0)p_t(x,0)+\rho^{\eps}(N+1,0)p_t(N+1,0)\\
\leq& \rho^{\eps}(0,0)+\dfrac{1}{M}\dsum_{x=1}^Np_t(x,0)+p_t(N+1,0).
\end{align*}

It follows from the setting \eqref{realize} and Proposition 3.6, [10], that we can attain the estimate 
\begin{equation}\label{numb}
 \dsum_{x=1}^Np_t(x,0)\leq \min(C\sqrt{t},N). 
\end{equation}

The upper bound of $p_t(N+1,0)$ is already derived strictly as in \eqref{num}. 

Then it implies that 
$$\rho^{\eps}_-(t)-\rho^{\eps}_-(0)\leq \dfrac{1}{N^{1+\alpha}}\min(C\sqrt{t},N)+\dfrac{C}{N^{2+\alpha}}t.$$
\end{proof}

In view of Proposition \ref{uniconv}, we deduce that 
$$|\rho^{\eps}_-(N^{2+\alpha}t)-\rho^{\eps}_-(N^{2+\alpha}s)|\leq C\sqrt{|t-s|}+C|t-s|,$$
$$|\rho^{\eps}_+(N^{2+\alpha}t)-\rho^{\eps}_+(N^{2+\alpha}s)|\leq C\sqrt{|t-s|}+C|t-s|.$$

Let us call $\tilde{\rho}_{\pm}^{\eps,\alpha}(t):=\rho^{\eps}_{\pm}(N^{2+\alpha}t)$.
Thus the following consequence holds.
\begin{cor}\label{subseq}
For any $T>0$, there exist subsequences $\tilde{\rho}_{\pm}^{\eps_k, \alpha}$ that converge uniformly to $v_{\pm}$ on $[0,T]$, respectively. Moreover, $v_{\pm}\in C([0,\infty))$.
\end{cor}

Next, we denote $\tilde{\rho}^{\eps, \alpha}(r,t):=\rho^{\eps}([Nr],N^{2+\alpha}t)$. In order to obtain Theorem \ref{PDE}, we aim to prove that the sequence $ \tilde{\rho}^{\eps, \alpha}$ converges pointwise to $u$ on $[0,1]\times (0,\infty)$, where the hydrodynamic limit $u$ is defined by
$$u(r,t)=\big(v_+(t)-v_-(t)\big)r+v_-(t),$$
where
\begin{align*}
v_-(t)&=\dfrac{v_{0,-}+v_{0,+}}{2}+\dfrac{v_{0,-}-v_{0,+}}{2}e^{-t}\\
v_+(t)&=\dfrac{v_{0,-}+v_{0,+}}{2}-\dfrac{v_{0,-}-v_{0,+}}{2}e^{-t}.
\end{align*}

Applying the same argument we used in the previous section for getting \eqref{main1} gives us
\begin{align}\label{main}
&\tilde{\rho}^{\eps, \alpha}(r,t)\notag\\
=&E_x\big[u_0(\eps Y(N^{2+\alpha}t))\mathbf{1}_{\tau_0^Y\wedge\tau_{N+1}^Y>N^{2+\alpha}t}\big]+\dint_0^{N^{2+\alpha}t}\rho^{\eps}_-(N^{2+\alpha}t-s)\,d\mathds{F}_{N,x}(s)\notag\\
&\hskip5.5cm+\dint_0^{N^{2+\alpha}t}\rho^{\eps}_+(N^{2+\alpha}t-s)\,d\mathds{G}_{N,x}(s).
\end{align}

We observe that the first term on the right hand side of \eqref{main} converges to $0$ as $N\to\infty$ by the estimate \eqref{prob1}.

Moreover, \eqref{prob1} also allows us to get that 
\begin{align*}
&\Big|\dint_0^{N^{2+\alpha}t}\rho^{\eps}_-(N^{2+\alpha}t-s)\,d\mathds{F}_{N,x}(s)-\dint_0^{N^{2+\frac{\alpha}{2}}t}\rho^{\eps}_-(N^{2+\alpha}t-s)\,d\mathds{F}_{N,x}(s) \Big|\\
\leq &\mathds{F}_{N,x}(N^{2+\alpha}t)-\mathds{F}_{N,x}(N^{2+\frac{\alpha}{2}}t)\\
\leq &P_x(\tau_0^Y\wedge\tau_{N+1}^Y>N^{2+\frac{\alpha}{2}}t)\\
\leq &\dfrac{Cx}{N^{1+\frac{\alpha}{4}}\sqrt{t}}.
\end{align*}

On the other hand, by Corollary \ref{subseq}, for any $T>0$, there exist subsequences $\tilde{\rho}_{\pm}^{\eps_k,\alpha}$ that converge uniformly on $[0,T]$ to $v_{\pm}$ respectively. Let us verify the following uniform limit
\begin{equation}\label{equi}
\dlim_{k\to\infty}\sup_{t\in [0,T]}\dint_0^{N^{2+\frac{\alpha}{2}}t}\big|\rho^{\eps_k}_-(N^{2+\alpha}t-s)-v_-(t)\big|\,d\mathds{F}_{N,x}(s)=0.
\end{equation}

In view of Proposition \ref{uniconv}, the term in the above limit is bounded by
\begin{align*}
&\sup_{t\in [0,T]}\dint_0^{N^{2+\frac{\alpha}{2}}t}\big|\tilde{\rho}^{\eps_k, \alpha}_-(t-N^{-(2+\alpha)}s)-\tilde{\rho}^{\eps_k, \alpha}_-(t)\big|\,d\mathds{F}_{N,x}(s)\\
&\quad+\sup_{t\in [0,T]}\dint_0^{N^{2+\frac{\alpha}{2}}t}\big|\tilde{\rho}^{\eps_k, \alpha}_-(t)-v_-(t)\big|\,d\mathds{F}_{N,x}(s)\\
\leq &C\sup_{t\in [0,T]}\dint_0^{N^{2+\frac{\alpha}{2}}t}\Big(\dfrac{\sqrt{s}}{N^{1+\frac{\alpha}{2}}}+\dfrac{s}{N^{2+\alpha}}\Big)\,d\mathds{F}_{N,x}(s)+\sup_{t\in [0,T]}\big|\tilde{\rho}^{\eps_k, \alpha}_-(t)-v_-(t)\big|\\
\leq &\dfrac{C}{N^{\frac{\alpha}{4}}}(\sqrt{T}+T)+\sup_{t\in [0,T]}\big|\tilde{\rho}^{\eps_k, \alpha}_-(t)-v_-(t)\big|.
\end{align*}

Thus it implies \eqref{equi}.

Next, using \eqref{prob1} gives us
$$|\mathds{F}_{N,x}(N^{2+\frac{\alpha}{2}}t)-P_x(\tau_0^Y<\tau_{N+1}^Y)|\leq P_x(\tau_0^Y>N^{2+\frac{\alpha}{2}}t)\leq \dfrac{Cx}{N^{1+\frac{\alpha}{4}}\sqrt{t}}.$$

Since
$$ P_x(\tau_0^Y<\tau_{N+1}^Y)=1-\dfrac{x}{N+1}, $$
we attain that for any $(r,t)\in [0,1]\times (0,\infty)$, 
$$ \dlim_{k\to\infty}\dint_0^{N^{2+\alpha}t}\rho^{\eps_k}_-(N^{2+\alpha}t-s)\,d\mathds{F}_{N,x}(s) = v_-(t)(1-r). $$

and similarly,
$$ \dlim_{k\to\infty}\dint_0^{N^{2+\alpha}t}\rho^{\eps_k}_+(N^{2+\alpha}t-s)\,d\mathds{G}_{N,x}(s) = v_+(t)r. $$

Hence, the subsequence $ \tilde{\rho}^{\eps_k, \alpha}$ converges pointwise on $[0,1]\times (0,\infty)$ to $u$, where
\begin{equation}\label{rep}
u(r,t)=\big(v_+(t)-v_-(t)\big)r+v_-(t).
\end{equation}

On the identification of the limit functions $v_{\pm}$, we use the same technique as introduced in Proposition 3.9, \cite{N}. More precisely, we have the following result.
\begin{prop}\label{prop3.8}
For any $l\in [0,1]$ and $t,t_0\in (0,\infty)$, we have
\begin{align*}
&\dint_{t_0}^t \dfrac{1}{2}u_r(l,s)\,ds+v_-(t_0)-v_-(t)=0\\
&\dint_{t_0}^t -\dfrac{1}{2}u_r(l,s)\,ds+v_+(t_0)-v_+(t)=0.
\end{align*}
\end{prop}
\begin{proof}
We call $L=\eps^{-1}l, H=\eps^{-1}h$. Let us consider the average
$$ \psi^{\alpha}_{\eps,h}(l,t):=\dfrac{1}{H}\dsum_{x=L-H+1}^L\,\eps\dsum_{y=1}^x\,\rho^{\eps}(y,N^{2+\alpha}t). $$

We recall the differential system \eqref{2.6}, \eqref{2.7}, then for any $t,t_0>0$, the difference $\eps^{\alpha}(\psi^{\alpha}_{\eps,h}(l,t)-\psi^{\alpha}_{\eps,h}(l,t_0))$ can be written as follows.
\begin{align}
&\eps^{\alpha}(\psi^{\alpha}_{\eps,h}(l,t)-\psi^{\alpha}_{\eps,h}(l,t_0))\notag\\
=&\dfrac{1}{H}\dsum_{x=L-H+1}^L\eps^{1+\alpha}\dint_{N^{2+\alpha}t_0}^{N^{2+\alpha}t}\dsum_{y=1}^x\dfrac{d}{ds}\rho^{\eps}(y,s)\,ds\notag\\
=&\dfrac{1}{H}\dsum_{x=L-H+1}^L\eps^{1+\alpha}\dint_{t_0}^tN^{2+\alpha}\dfrac{1}{2}\big[\rho^{\eps}(x+1,N^{2+\alpha}s)-\rho^{\eps}(x,N^{2+\alpha}s)\notag\\
&\hskip 5cm+\rho^{\eps}(0,N^{2+\alpha}s)-\rho^{\eps}(1,N^{2+\alpha}s)\big]\,ds\notag\\
=&\dint_{t_0}^t\dfrac{1}{2}\dfrac{\rho^{\eps}(L+1,N^{2+\alpha}s)-\rho^{\eps}(L-H+1,N^{2+\alpha}s)}{h}\,ds\notag\\
&\hskip 5cm+\rho^{\eps}_-(N^{2+\alpha}t_0)-\rho^{\eps}_-(N^{2+\alpha}t).\label{laeq}
\end{align}

Since 
$$\psi^{\alpha}_{\eps,h}(l,t)=\dfrac{\eps}{H}\bigg(H\dsum_{x=1}^{L-H+1}\rho^{\eps}(x,N^{2+\alpha}t)+\dsum_{y=L-H+2}^L\,(L+1-y)\rho^{\eps}(y,N^{2+\alpha}t)\bigg)$$
then
$$\Big|\psi^{\alpha}_{\eps,h}(l,t)-\eps\dsum_{x=1}^{L-H+1}\rho^{\eps}(x,N^{2+\alpha}t)\Big|\leq\dfrac{\eps}{H}\dsum_{y=L-H+2}^L\,(L+1-y)=\dfrac{h-\eps}{2}.$$

Along the subsequence $\rho^{\eps_k}$, we have
\begin{equation}\label{H}
\lim_{h\to 0}\lim_{\eps\to 0}\psi^{\alpha}_{\eps,h}(l,t)=\dint_0^lu(r,t)\,dr. 
\end{equation}

Thus
$$ \lim_{h\to 0}\lim_{\eps\to 0}\eps^{\alpha}(\psi^{\alpha}_{\eps,h}(l,t)-\psi^{\alpha}_{\eps,h}(l,t_0))=0. $$

Moreover, taking the limit along the subsequence $\rho^{\eps_k}$ and then $h\to 0$ of the right hand side of the last equality \eqref{laeq} will give us the desired results.
\end{proof}

Proposition \ref{prop3.8} and the representation \eqref{rep} enable us to deduce that 
\begin{align*}
\dfrac{d}{dt}v_-(t)&=\dfrac{1}{2}\big(v_+(t)-v_-(t)\big)\\
\dfrac{d}{dt}v_+(t)&=-\dfrac{1}{2}\big(v_+(t)-v_-(t)\big).
\end{align*}

Since $\dlim_{t\to 0}v_{\pm}(t) = v_{0,\pm}$, it implies that
\begin{align}
v_-(t)&=\dfrac{v_{0,-}+v_{0,+}}{2}+\dfrac{v_{0,-}-v_{0,+}}{2}e^{-t}\label{-}\\
v_+(t)&=\dfrac{v_{0,-}+v_{0,+}}{2}-\dfrac{v_{0,-}-v_{0,+}}{2}e^{-t}.\label{+}
\end{align}

Notice that we have just obtained the convergence of the sequence $\tilde{\rho}^{\eps,\alpha}$ up to a subsequence. Now if we choose other subsequences $\tilde{\rho}^{\eps_m,\alpha}_{\pm}$, making a similar argument as above will give us their limits $\hat{v}_{\pm}$ defined as in \eqref{-} and \eqref{+}. It follows that the corresponding subsequence $\tilde{\rho}^{\eps_m,\alpha}$ also converges to $u$ given by \eqref{rep}, where $v_{\pm}$ satisfy \eqref{-} and \eqref{+}. This completes the proof of our theorem.
\end{proof}
\section{Global equilibrium limit}\label{Global}
\begin{thm}[Global equilibrium limit]\label{PDElar}
Let $\rho^{\eps}_{\pm}(0)=v_{0,\pm}\in [0,1] $. Then for any $\alpha'>\alpha$, $r \in [0,1]$ and $t>0$, the sequences
$\rho^\eps([Nr],N^{2+\alpha'}t)$ and  $\rho^\eps_{\pm}(N^{2+\alpha'}t)$ have the same limit $(v_{0,-}+v_{0,+})/2$.
\end{thm}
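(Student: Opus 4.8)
The plan is to read everything off the duality identity \eqref{duality*}. At time $0$ the boundary values are $\rho^{\eps}(0,0)=v_{0,-}$ and $\rho^{\eps}(N+1,0)=v_{0,+}$, while $\rho^{\eps}(y,0)\in[0,1]$ for all $y$, so Proposition \ref{prop3.1} gives, for every $x\in\bar{\Lambda}_N$,
\begin{equation}\label{gesplit}
\rho^{\eps}(x,N^{2+\alpha'}t)=p_{N^{2+\alpha'}t}(x,0)\,v_{0,-}+p_{N^{2+\alpha'}t}(x,N+1)\,v_{0,+}+\sum_{y=1}^{N}p_{N^{2+\alpha'}t}(x,y)\,\rho^{\eps}(y,0),
\end{equation}
and the last sum is at most $\sum_{y=1}^{N}p_{N^{2+\alpha'}t}(x,y)=1-p_{N^{2+\alpha'}t}(x,0)-p_{N^{2+\alpha'}t}(x,N+1)$. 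Hence the theorem, in fact uniformly in $x\in\bar{\Lambda}_N$, follows once we show that for every $\alpha'>\alpha$ and $t>0$,
\begin{equation}\label{gel}
\lim_{N\to\infty}\sup_{x\in\bar{\Lambda}_N}\big|p_{N^{2+\alpha'}t}(x,0)-\tfrac12\big|=0,\qquad\lim_{N\to\infty}\sup_{x\in\bar{\Lambda}_N}\big|p_{N^{2+\alpha'}t}(x,N+1)-\tfrac12\big|=0,
\end{equation}
that is, that the sticky walk run for a time of order $N^{2+\alpha'}$ has essentially reached its stationary law, which puts mass $\tfrac12$ on each of $0,N+1$ and vanishing mass on $\Lambda_N$.

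The first ingredient is $\sup_x P_x\big(X(N^{2+\alpha'}t)\in\Lambda_N\big)=\sup_x\sum_{y=1}^N p_{N^{2+\alpha'}t}(x,y)\to0$. For $x\in\{0,N+1\}$ this is immediate from the reversibility relations $Mp_s(0,y)=p_s(y,0)$ and $Mp_s(N+1,y)=p_s(y,N+1)$ displayed in the proof of Proposition \ref{uniconv}: indeed $\sum_{y=1}^N p_s(0,y)=\tfrac1M\sum_{y=1}^N p_s(y,0)\le N/M=N^{-\alpha}$ for every $s\ge0$, and likewise for $N+1$. For $x\in\Lambda_N$ I would condition on $\tau=\tau^X_0\wedge\tau^X_{N+1}$ and use the strong Markov property: $P_x(X(s)\in\Lambda_N)\le P_x(\tau>s)+\sup_{z\in\{0,N+1\},\,0\le u\le s}P_z(X(u)\in\Lambda_N)\le E_x[\tau]/s+N^{-\alpha}$, and since up to $\tau$ the walk is just the simple symmetric random walk on $\{1,\dots,N\}$ we have $E_x[\tau]=x(N+1-x)\le N^2$; taking $s=N^{2+\alpha'}t$ with $\alpha'>0$ makes this vanish. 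In particular $p_{N^{2+\alpha'}t}(x,0)+p_{N^{2+\alpha'}t}(x,N+1)\to1$ uniformly in $x$.

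The second ingredient is $\sup_x\big|p_{N^{2+\alpha'}t}(x,0)-p_{N^{2+\alpha'}t}(x,N+1)\big|\to0$, which together with the previous paragraph yields \eqref{gel}. Here I use the reflection symmetry $p_s(x,y)=p_s(N+1-x,N+1-y)$ of the sticky walk, which holds because the generator \eqref{gen} is invariant under $x\mapsto N+1-x$. Take $N$ odd and put $m=(N+1)/2$, so $p_s(m,0)=p_s(m,N+1)$, and let $T_m$ be the first hitting time of $m$. For $x\le m$ a nearest-neighbour trajectory from $x$ to $N+1$ must cross $m$, hence $p_s(x,N+1)=E_x\big[\mathbf{1}_{T_m\le s}\,p_{s-T_m}(m,N+1)\big]$, whereas $p_s(x,0)=P_x(X(s)=0,\,T_m>s)+E_x\big[\mathbf{1}_{T_m\le s}\,p_{s-T_m}(m,0)\big]$; subtracting and using $p_{s-T_m}(m,0)=p_{s-T_m}(m,N+1)$ gives $p_s(x,0)-p_s(x,N+1)=P_x(X(s)=0,\,T_m>s)\in[0,P_x(T_m>s)]$, and the mirror bound holds for $x\ge m$. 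Thus $\sup_x\big|p_s(x,0)-p_s(x,N+1)\big|\le\max_z P_z(T_m>s)$. (For $N$ even one argues identically using the central pair $\{N/2,N/2+1\}$ in place of $m$ and coalescing the two mirror trajectories there.)

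Finally, the mean hitting time of $m$ is monotone in the distance to $m$ along the path, so $\max_z E_z[T_m]=E_0[T_m]$, and $h(x)=E_x[T_m]$ solves the elementary system $\tfrac12\big(h(x+1)+h(x-1)-2h(x)\big)=-1$ on $\{1,\dots,m-1\}$, $h(m)=0$, $h(1)-h(0)=-2M$, whose solution is $h(x)=-x^2+(1-2M)x+m(m-1+2M)$; in particular $E_0[T_m]=m(m-1+2M)\le CN^{2+\alpha}$. Hence $\max_z P_z(T_m>N^{2+\alpha'}t)\le E_0[T_m]/(N^{2+\alpha'}t)\le CN^{-(\alpha'-\alpha)}/t\to0$ because $\alpha'>\alpha$, and combined with the second paragraph this gives \eqref{gel} and completes the proof. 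I expect the uniform control of the sticky-walk transition probabilities to be the main obstacle: the key is to realise that the true equilibration scale is $N^{2+\alpha}$ — the order of the escape time $E_0[\tau^X_{N+1}]$ between the two reservoirs — and to find the right device (here the symmetrisation through the midpoint) that turns the quantitative mixing statement into a single explicitly solvable hitting-time problem; everything else is bookkeeping on \eqref{gesplit}.
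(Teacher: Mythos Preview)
Your proof is correct and takes a genuinely different route from the paper's.

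The paper proceeds in three steps: (i) it shows $P_{N+1}(\tau_0^X<N^{2+\alpha'}t)\to1$ by a coupling monotonicity argument, then a lower bound $P_{N+1}(\tau_0^X<2N^{2+\alpha})\ge\delta$ obtained through the convergence of a rescaled sticky walk to sticky Brownian motion (invoking \cite{A},\cite{H}), and finally a geometric block iteration; (ii) it couples two sticky walks started at $0$ and $N+1$ to deduce $|\rho^{\eps}_-(N^{2+\alpha'}t)-\rho^{\eps}_+(N^{2+\alpha'}t)|\to0$ and likewise for interior points; (iii) it reads off the common limiting value $(v_{0,-}+v_{0,+})/2$ from conservation of mass.

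You bypass all of this by showing directly that the transition kernel of the sticky walk equilibrates: $p_{N^{2+\alpha'}t}(x,0)\to\tfrac12$ and $p_{N^{2+\alpha'}t}(x,N+1)\to\tfrac12$, uniformly in $x$. The two ingredients --- reversibility to kill the bulk mass, and the reflection symmetry $x\mapsto N+1-x$ to reduce $|p_s(x,0)-p_s(x,N+1)|$ to the tail $P_x(T_m>s)$ of the midpoint hitting time, which you then compute explicitly as $O(N^{2+\alpha})$ --- are entirely elementary. This buys you a self-contained argument with no appeal to sticky Brownian motion or to conservation of mass, and the conclusion is in fact uniform in $x\in\bar\Lambda_N$. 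The paper's approach, by contrast, is more probabilistic in flavour (couplings and a conservation law) but relies on heavier external input to get the crucial lower bound \eqref{del}.

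One small remark: your treatment of $N$ even is only sketched. The cleanest fix is to keep $m=\lceil(N+1)/2\rceil$ and observe that $p_s(m,N+1)=p_s(N+1-m,0)$ by symmetry, so $|p_s(m,0)-p_s(m,N+1)|=|p_s(m,0)-p_s(m-1,0)|\le P_m(T_{m-1}>s)$, which is $O(1/s)$ since $E_m[T_{m-1}]=1$; this extra term is harmless.
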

\begin{proof} The verification is split into some steps.
\begin{prop}\label{U}For any $\alpha'>\alpha$, 
$$\dlim_{N\to\infty}P_{N+1}(\tau_0^X<N^{2+\alpha'}t)=1.$$
\end{prop}
\begin{proof}
We first observe that for any $x\in\Lambda_N$, 
\begin{equation}\label{coup}
P_x(\tau_0^X\leq t)\geq P_{N+1}(\tau_0^X\leq t).
\end{equation}

Indeed, let us consider the coupling $\underline{Y}=(Y_1, Y_2)$ constructed as follows: $Y_1$ and $Y_2$ are sticky random walks whose generators are given by \eqref{gen}, they start from $x$ and $N+1$, respectively, move independently up to the first time when they meet each other and from that time they move in the same way. Then we can write
\begin{align*}
P_x(\tau_0^X\leq t)-P_{N+1}(\tau_0^X\leq t)=&\mathbb{E}_{(x,N+1)}\big[\mathbf{1}_{\tau_0^{Y_1}\leq t}-\mathbf{1}_{\tau_0^{Y_2}\leq t}\big]\\
=&\mathbb{E}_{(x,N+1)}\Big[\big[\mathbf{1}_{\tau_0^{Y_1}\leq t}-\mathbf{1}_{\tau_0^{Y_2}\leq t}\big]\mathbf{1}_{\tau_0^{Y_1}<\tau_0^{Y_2}}\Big]\geq 0.
\end{align*}

Next, we will check that there exists $\delta>0$ such that
\begin{equation}\label{del}
P_{N+1}(\tau_0^X<2N^{2+\alpha})\geq \delta.
\end{equation}

Let $\bar{X}$ be a sticky random walk moving on $\bar{\Lambda}_N$, starting from $N+1$, with jump rates $\bar{c}(x,x\pm 1)=\dfrac{1}{2}, \forall x\in\Lambda_N$ and $\bar{c}(0,1)=\bar{c}(N+1,N)=\dfrac{1}{2N}$. We consider a random walk $\bar{Y}$ on $\mathbb{Z}$ sticky at $N+1$ moving in the same way as $\bar{X}$ up to the first time when $\bar{X}$ reaches $0$. Then
\begin{align*}
P_{N+1}(\tau_0^X<2N^{2+\alpha})\geq & P_{N+1}(\tau_0^{\bar{X}}<N^2)= P_{N+1}(\tau_0^{\bar{Y}}<N^2)\\
\geq & P_{N+1}(\bar{Y}(N^2)<0).
\end{align*}

Based on the proof presented in \cite{A}, it can be shown that the sticky random walk $\bar{Y}$ converges uniformly almost surely on compact intervals of $[0,\infty)$ to the Brownian motion $\bar{B}$ sticky at $1$ with sticky coefficient $1/2$. Moreover, by Proposition 11, Section 2.4, \cite{H}, the transition probability kernel for $\bar{B}$ is given by
\begin{align*}
\bar{p}_t(x,y)=&\dfrac{1}{\sqrt{2\pi t}}\Big(e^{-\frac{(x-y+1)^2}{2t}}-e^{-\frac{(|x|+|y-1|)^2}{2t}}\Big)\\
&+\dfrac{1}{2}e^{|x|+|y-1|}e^{t/2}\Erfc\Big(\dfrac{\sqrt{2t}}{2}+\dfrac{|x|+|y-1|}{\sqrt{2t}}\Big)\\
&+\delta_1(y)e^{|x|}e^{t/2}\Erfc\Big(\dfrac{\sqrt{2t}}{2}+\dfrac{|x|}{\sqrt{2t}}\Big),
\end{align*}
where $\Erfc(x)=\dint_x^{\infty}\dfrac{2}{\sqrt{\pi}}e^{-z^2}\,dz$. Hence,
\begin{align*}
 \dlim_{N\to\infty}P_{N+1}(\bar{Y}(N^2)<0)=&P_1(\bar{B}(1)<0)=\dint_{-\infty}^0\bar{p}_1(1,y)\,dy\\
=&\dint_{-\infty}^0\dfrac{1}{2}e^{2-y}e^{1/2}\Erfc\Big(\dfrac{3-y}{\sqrt{2}}\Big)\,dy>0.
\end{align*}

It implies \eqref{del}.

Now we set $\mathcal{T}=2N^{2+\alpha}$ and $K=\left[\dfrac{N^{2+\alpha'}t}{\mathcal{T}}\right]$. Using the Markov property, the estimates \eqref{coup} and \eqref{del} allows us to obtain
\begin{align*}
P_{N+1}(\tau_0^X>2\mathcal{T})=&P_{N+1}(\tau_0^X>\mathcal{T}, \tau_0^X>2\mathcal{T})\\
=&E_{N+1}\Big[E_{N+1}\big[\mathbf{1}_{\tau_0^X>\mathcal{T}}\mathbf{1}_{\tau_0^X>2\mathcal{T}}|\mathcal{F}_{\mathcal{T}}\big]\Big]\\
=&E_{N+1}\Big[\mathbf{1}_{\tau_0^X>\mathcal{T}}E_{X_{\mathcal{T}}}\big[\mathbf{1}_{\tau_0^X>\mathcal{T}}\big]\Big]\\
\leq&(1-\delta)P_{N+1}(\tau_0^X>\mathcal{T})\\
\leq& (1-\delta)^2.
\end{align*}

Then it can be deduced that
$$P_{N+1}(\tau_0^X>N^{2+\alpha'}t)\leq P_{N+1}(\tau_0^X>K\mathcal{T})\leq (1-\delta)^K.$$

Since $\alpha'>\alpha$, we have $K\to\infty$ when $N\to\infty$. Hence, the above estimate gives us the desired result.
\end{proof}
\begin{lem}\label{equal}
For any $\alpha'>\alpha$ and $t>0$,
$$ \dlim_{N\to\infty}\big|\rho^{\eps}_-(N^{2+\alpha'}t)-\rho^{\eps}_+(N^{2+\alpha'}t)\big|=0, $$
$$ \dlim_{N\to\infty}\big|\rho^{\eps}(x,N^{2+\alpha'}t)-\rho^{\eps}_+(N^{2+\alpha'}t)\big|=0, \forall x\in\Lambda_N. $$
\end{lem}
\begin{proof}
Since both equalities can be verified in the same way, we present here the proof of the first equality. By Proposition \ref{prop3.1}, it remains to check that
$$ \dlim_{N\to\infty}\big|E_0\big[\rho^{\eps}_0(X(N^{2+\alpha'}t))\big]-E_{N+1}\big[\rho^{\eps}_0(X(N^{2+\alpha'}t))\big]\big|=0. $$

Let us consider the coupling $\underline{Z}=(Z_1, Z_2)$ constructed as follows: $Z_1$ and $Z_2$ are sticky random walks whose generators are given by \eqref{gen}, they start from $0$ and $N+1$, respectively, move independently up to the first time when they meet each other and from that time they move in the same way. Hence, the difference in the above limit can be rewritten as
\begin{align*}
&\Big|\mathbb{E}_{(0,N+1)}\Big[\rho^{\eps}_0(Z_1(N^{2+\alpha'}t))-\rho^{\eps}_0(Z_2(N^{2+\alpha'}t))\Big]\Big|\\
=&\Big|\mathbb{E}_{(0,N+1)}\Big[[\rho^{\eps}_0(Z_1(N^{2+\alpha'}t))-\rho^{\eps}_0(Z_2(N^{2+\alpha'}t))]\mathbf{1}_{\mathds{U}^c}\Big]\Big|\\
\leq &\mathbb{P}_{(0,N+1)}(\mathds{U}^c),
\end{align*}
where $\mathds{U}=\{\tau_0^{Z_2}<N^{2+\alpha'}t\}$.
It implies our desired equality due to Proposition \ref{U}.
\end{proof}

Let us go back to the proof of Theorem \ref{PDElar}. Making a similar argument as presented in Proposition 3.4, \cite{N}, gives us the following result.
\begin{cor}[Conservation of mass]\label{conmass} For any $t\geq 0$,
$$ \dsum_{x=1}^N\rho^{\eps}_0(x)+N^{1+\alpha}\big(\rho_0^{\eps}(0)+\rho^{\eps}_0(N+1)\big)=  \dsum_{x=1}^N\rho^{\eps}(x,t)+N^{1+\alpha}\big(\rho^{\eps}(0,t)+\rho^{\eps}(N+1,t)\big).$$
\end{cor}

As a consequence of the conservation of mass obtained from Corollary \ref{conmass}, for any $t\geq 0$, we have
$$ \dsum_{x=1}^N\rho^{\eps}_0(x)+M\big(\rho_-^{\eps}(0)+\rho^{\eps}_+(0)\big)= \dsum_{x=1}^N\rho^{\eps}(x,N^{2+\alpha'}t)+M\big(\rho^{\eps}_-(N^{2+\alpha'}t)+\rho^{\eps}_+(N^{2+\alpha'}t)\big).$$

Then
\begin{align*}
&\bigg|\dfrac{\rho^{\eps}_-(N^{2+\alpha'}t)+\rho^{\eps}_+(N^{2+\alpha'}t)}{2}-\dfrac{v_{0,-}+v_{0,+}}{2}\bigg|\\
=&\dfrac{1}{2N^{1+\alpha}}\bigg|\dsum_{x=1}^N\rho^{\eps}(x,0)-\dsum_{x=1}^N\rho^{\eps}(x,N^{2+\alpha'}t)\bigg|\\
\leq &\dfrac{1}{N^{\alpha}}.
\end{align*}

It follows that
$$ \dlim_{N\to\infty}\dfrac{\rho^{\eps}_-(N^{2+\alpha'}t)+\rho^{\eps}_+(N^{2+\alpha'}t)}{2}=\dfrac{v_{0,-}+v_{0,+}}{2}. $$

Moreover, as a consequence of Lemma \ref{equal}, we can deduce that for any $\alpha'>\alpha, x\in\Lambda_N$ and $t>0$,
$$  \dlim_{N\to\infty}\rho^{\eps}(x,N^{2+\alpha'}t)= \dlim_{N\to\infty}\rho^{\eps}_-(N^{2+\alpha'}t)=\dlim_{N\to\infty}\rho^{\eps}_+(N^{2+\alpha'}t).  $$

Therefore, for any $\alpha'>\alpha$, the sequences $\rho^\eps([Nr],N^{2+\alpha'}t)$ and $\rho^\eps_{\pm}(N^{2+\alpha'}t)$ converge pointwise to the same limit $(v_{0,-}+v_{0,+})/2$.
\end{proof}
{\bf Acknowledgements.} I am truly grateful to Prof. Errico Presutti for providing me with so many worthwhile suggestions and helpful advices throughout discussion sessions.
\bibliographystyle{amsalpha}

\begin{thebibliography}{99}
\bibitem[1]{A} Amir, M.:
Sticky Brownian motion as the strong limit of a sequence of random walks. Stochastic Processes and their Applications {\bf 39(2)}, 221-237 (1991)

\bibitem[2]{DO}
De Masi, A., Olla, S.:
Quasi-static hydrodynamic limits. J. Stat. Phys. {\bf 161}, Issue 5, 1037–1058 (2015)

\bibitem[3]{GKMP}
Galves, A., Kipnis, C., Marchioro, C., Presutti, E.: 
Non equilibrium measures which exhibit a temperature gradient: study of a model. Commun. Math. Phys. {\bf 81}, 124-147 (1981)

\bibitem[4]{H} 
Howitt, C. J.:
Stochastic Flows and Sticky Brownian motion. PhD Thesis, The University of Warwick, (2007)

\bibitem[5]{N}
Nguyen, T. D. T.:
Fick law and sticky Brownian motions, to be submitted.
\end{thebibliography}

\end{document}